\newcommand{\nc}{\newcommand}
\nc{\Uca}{\mathcal{U}}
\nc{\RS}{\sigma_{\operatorname{Ric}}} \nc{\REV}{\operatorname{RicEV}}
\nc{\pr}{\operatorname{pr}}
\nc{\Gv}{{\G_{(\vg_i)} } }   \nc{\ggov}{{\ggo_{(\vg_i)} }}   \nc{\Gvt}{{\G^t_{(\vg_i)} } } 
\nc{\ig}{\mathfrak{i}}
\nc{\Gl}{\mathsf{GL}} \nc{\Or}{\mathsf{O}}  \nc{\SO}{\mathsf{SO}}   \nc{\Sl}{\mathsf{SL}}
\nc{\G}{\mathsf{G}} \nc{\K}{\mathsf{K}}  \nc{\T}{\mathsf{T}} \nc{\Lsf}{\mathsf{L}}
\nc{\Qb}{\mathsf{Q}_\Beta} \nc{\Hb}{\mathsf{H}_\Beta} \nc{\Ub}{\mathsf{U}_\Beta}
\nc{\Gb}{\mathsf{G}_\Beta} \nc{\Kb}{\mathsf{K}_\Beta}
\nc{\PPP}{\mathsf{P}} \nc{\U}{\mathsf{U}} \nc{\N}{\mathsf{N}} \nc{\Ss}{\mathsf{S}} \nc{\Aa}{\mathsf{A}}
\nc{\Hh}{\mathsf{H}}
\nc{\la}{\langle} \nc{\ra}{\rangle}
\nc{\laH}{\la\!\la} \nc{\raH}{\ra\!\ra}
\nc{\ipH}{{\laH \cdot, \cdot \raH}}
\nc{\iph}{{\la h \cdot , h \cdot \ra}}
\nc{\Vg}{{V(\ggo)}}
\nc{\alert}{\color{blue}}
\nc{\fg}{\mathfrak{f}}  \nc{\vg}{\mathfrak{v}} \nc{\wg}{\mathfrak{w}} \nc{\zg}{\mathfrak{z}} \nc{\ngo}{\mathfrak{n}} \nc{\kg}{\mathfrak{k}} \nc{\mg}{\mathfrak{m}} \nc{\bg}{\mathfrak{b}} \nc{\ggo}{\mathfrak{g}} \nc{\ggob}{\overline{\mathfrak{g}}} \nc{\sog}{\mathfrak{so}} \nc{\sug}{\mathfrak{su}} \nc{\spg}{\mathfrak{sp}} \nc{\slg}{\mathfrak{sl}} \nc{\glg}{\mathfrak{gl}} \nc{\cg}{\mathfrak{c}} \nc{\rg}{\mathfrak{r}}  \nc{\hg}{\mathfrak{h}} \nc{\tgo}{\mathfrak{t}} \nc{\ug}{\mathfrak{u}} \nc{\dg}{\mathfrak{d}} \nc{\ag}{\mathfrak{a}} \nc{\pg}{\mathfrak{p}} \nc{\sg}{\mathfrak{s}} \nc{\affg}{\mathfrak{aff}} \nc{\qg}{\mathfrak{q}}
\nc{\Xg}{\mathfrak{X}} \nc{\lgo}{\mathfrak{l}}
\nc{\pca}{\mathcal{P}} \nc{\nca}{\mathcal{N}} \nc{\lca}{\mathcal{L}} \nc{\oca}{\mathcal{O}} \nc{\mca}{\mathcal{M}} \nc{\tca}{\mathcal{T}} \nc{\aca}{\mathcal{A}} \nc{\cca}{\mathcal{C}} \nc{\gca}{\mathcal{G}} \nc{\sca}{\mathcal{S}} \nc{\hca}{\mathcal{H}} \nc{\bca}{\mathcal{B}} \nc{\dca}{\mathcal{D}}
\nc{\vp}{\varphi} \nc{\ddt}{\tfrac{{\rm d}}{{\rm d}t}} \nc{\dds}{\tfrac{{\rm d}}{{\rm d}s}} \nc{\ddtbig}{\frac{{\rm d}}{{\rm d}t}} \nc{\dd}{{\rm d}}
\nc{\dpar}{\tfrac{\partial}{\partial t}} \nc{\im}{\mathtt{i}}
 \nc{\SU}{\mathsf{SU}} 
\nc{\RR}{{\mathbb R}} \nc{\HH}{{\mathbb H}} \nc{\CC}{{\mathbb C}} \nc{\ZZ}{{\mathbb Z}}
\nc{\FF}{{\mathbb F}} \nc{\NN}{{\mathbb N}} \nc{\QQ}{{\mathbb Q}} \nc{\PP}{{\mathbb P}}
\nc{\vs}{\vspace{.2cm}} \nc{\vsp}{\vspace{1cm}} \nc{\ip}{{\langle\cdot,\cdot\rangle}}
\nc{\ipp}{(\cdot,\cdot)} \nc{\unm}{\tfrac{1}{2}}
\nc{\unc}{\tfrac{1}{4}} \nc{\und}{\tfrac{1}{16}} \nc{\no}{\vs\noindent}
\nc{\lam}{\Lambda^2(\RR^n)^*\otimes\RR^n} \nc{\tangz}{{\rm T}^{\rm Zar}}
\nc{\lamg}{\Lambda^2\ggo^*\otimes\ggo}
\nc{\nor}{{\sf n}}  \nc{\mum}{/\!\!/} \nc{\kir}{/\!\!/\!\!/}
\nc{\Ri}{\tfrac{4\Ric_{\mu}}{||\mu||^2}} \nc{\ds}{\displaystyle}
\nc{\lb}{[\cdot,\cdot]} \nc{\isn}{\tfrac{1}{||v||^2}}
\nc{\gkp}{(\ggo=\kg\oplus\pg,\ip)} \nc{\ukh}{(\ug=\kg\oplus\hg,\ip)}
\nc{\tgkp}{(\tilde{\ggo}=\kg\oplus\pg,\ip)}
\nc{\wt}{\widetilde}
\nc{\raw}{\rightarrow} \nc{\lraw}{\longrightarrow} \nc{\hqn}{\mathcal{H}_{q,n}}
\nc{\Spec}{\operatorname{Spec}}
\nc{\ad}{\operatorname{ad}}  \nc{\Aut}{\operatorname{Aut}}   \nc{\Inn}{\operatorname{Inn}}   \nc{\Lie}{\operatorname{Lie}} \nc{\Ad}{\operatorname{Ad}} \nc{\Der}{\operatorname{Der}} \nc{\rad}{\operatorname{rad}} \nc{\kf}{\operatorname{B}}
\nc{\End}{\operatorname{End}} \nc{\rank}{\operatorname{rank}} \nc{\Ker}{\operatorname{Ker}} \nc{\tr}{\operatorname{tr}} \nc{\sym}{\operatorname{sym}} \nc{\diag}{\operatorname{diag}} \nc{\proy}{\operatorname{pr}} \nc{\Adj}{\operatorname{Adj}} \nc{\vspan}{\operatorname{span}}
\nc{\Hess}{\operatorname{Hess}}  \nc{\dif}{\operatorname{d}} \nc{\sen}{\operatorname{sen}} \nc{\grad}{\operatorname{grad}} \nc{\Order}{\operatorname{O}} \nc{\divg}{\operatorname{div}}
\nc{\Iso}{\operatorname{Iso}} \nc{\Diff}{\operatorname{Diff}} \nc{\Rc}{\operatorname{Rc}} \nc{\Ricci}{\operatorname{Ric}} \nc{\Riem}{\operatorname{Rm}} \nc{\scalar}{\operatorname{sc}} \nc{\scalarm}{\hat{\operatorname{R}}} \nc{\Riccim}{\widehat{\operatorname{Ric}}} \nc{\tang}{\operatorname{T}} \nc{\vol}{\operatorname{vol}}
\nc{\mm}{\operatorname{M}} \nc{\CH}{\operatorname{CH}} \nc{\Irr}{\operatorname{Irr}} \nc{\mcc}{\operatorname{mcc}} \nc{\m}{\operatorname{m}}
\nc{\Id}{\operatorname{Id}}  \nc{\mmm}{\operatorname{m}}
\theoremstyle{plain}
\newtheorem{theorem}{Theorem}[section]
\newtheorem{proposition}[theorem]{Proposition}
\newtheorem{corollary}[theorem]{Corollary}
\newtheorem{lemma}[theorem]{Lemma}
\theoremstyle{definition}
\newtheorem{problem}[theorem]{Problem}
\theoremstyle{plain}
\theoremstyle{remark}
\newtheorem{remark}[theorem]{Remark}
\newtheorem{notation}[theorem]{Notation}
\newtheorem{example}[theorem]{Example}
\title[SKT structures on nilmanifolds]{SKT structures on nilmanifolds}
\author{Romina M.~Arroyo}
\author{Marina Nicolini}
\thanks{
 }
\begin{document}
\begin{abstract} 
The aim of this article is to study the existence of invariant SKT structures on nilmanifolds. More precisely, we  give a negative answer to the question of whether there exist a $k$-step ($k>2$) complex nilmanifold admitting an invariant SKT metric. We also provide a construction which serves as a tool to generate examples of invariant SKT structures on $2$-step nilmanifolds in arbitrary dimensions.  
\end{abstract}

\maketitle

% \tableofcontents
%\vspace{-13pt}

\section{Introduction}

Let $(M,J,g)$ be a Hermitian manifold with associated fundamental form $\omega$. If $\omega$ is not closed, it means that the manifold is not Kähler, then the Levi-Civita connection does not preserve the complex structure. There are plenty of connections preserving both structures (\cite{Gau97}), but there is only one such that the torsion $3$-form  is totally skew-symmetric, the so-called \emph{Bismut connection}.  When the $3$-torsion form is in addition closed, the Hermitian manifold $(M,J,g)$ is said to be \emph{strong Kähler with torsion} (SKT for short) or \emph{pluriclosed}.

We are interested in the study of invariant SKT structures on nilmanifolds. Here, $M$ is a compact quotient $\Gamma \backslash N$, of a simply-connected nilpotent Lie group $N$ by a co-compact lattice $\Gamma$, and the Hermitian structure comes
from a left-invariant Hermitian structure on the Lie group~ $N$. 

Over recent years, invariant SKT structures on nilmanifolds have been studied by many authors, and remarkably, still not much is known about their existence. The classification in dimensions $4$, $6$ and $8$ was obtained in \cite{MadsenSwann,FinoPartonSalamon,EFV12}, respectively. Regarding higher dimensions, a characterization of a class of SKT nilmanifolds was studied in \cite{ZZ19}, where the complex structure is nilpotent and the compatible metric is Kähler-like. On the other hand, to the best of our knowledge, the only non-existence results in arbitrary dimensions are given in \cite{EFV12}.

All known examples in the literature of nilmanifolds admitting an SKT structure are $2$-step nilpotent. In \cite[Theorem 1.2]{EFV12}, it is stated that the latter exhaust all the nilpotent examples. Unfortunately, its proof has a gap (see \cite{FV2019}), leading to the following problem.

\begin{problem}\label{problem}(\cite{FV2019,FS20,DFFL21,FTV21})
Does a $k$-step complex nilmanifold ($k>2$) admitting an invariant SKT structure exist?
\end{problem}

The first partial negative answer to Problem \ref{problem} was given in \cite{ZZ19}, where the authors work on Kähler-like structures on nilmanifolds asumming nilpotency on the complex structure. The latter turn out to be $2$-step nilmanifolds and the complex structure is necessarily abelian. After that, in the recent work \cite{FTV21}, a negative answer to the problem was obtained on complex nilmanifolds with the abelian assumption in the complex structure.

Our main result gives a complete answer to Problem \ref{problem}. As an important consequence, \cite[Theorem 2.3]{Enr13}, \cite[Theorem 1.1]{EFV12} and \cite[Theorem 1.1]{FV16} turn out to be valid. Moreover, the long-time behaviour of the pluriclosed flow of invariant SKT structures on nilmanifolds is now completely understood (see \cite[Theorem A]{AL19}). 

\begin{theorem}\label{main theorem}
Any nilmanifold admitting an invariant SKT structure is either a torus or $2$-step nilpotent.
\end{theorem}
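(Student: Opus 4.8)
The plan is to work at the Lie algebra level, writing $\ngo$ for the nilpotent Lie algebra, $J$ for the invariant complex structure, $g$ for the compatible inner product, and $\omega=g(J\cdot,\cdot)$ for the fundamental form. The SKT condition is $\dd\dd^c\omega=0$, equivalently $\partial\overline\partial\omega=0$, which is a quadratic condition on the structure constants of $\ngo$ expressed in a unitary coframe. I would start from the descending central series $\ngo\supseteq\ngo^1=[\ngo,\ngo]\supseteq\ngo^2=[\ngo,\ngo^1]\supseteq\cdots$ and its $J$-invariant counterpart: recall that for \emph{any} invariant complex structure on a nilpotent Lie algebra the terms of the ascending central series need not be $J$-invariant, but one can always build an adapted filtration. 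The heart of the matter is to show that if $\ngo$ is not abelian and not $2$-step, i.e. $\ngo^2\neq 0$, then the SKT identity forces a contradiction. So the key quantity to track is the bracket restricted to $\ngo^1$, and I would aim to prove that $[\ngo,\ngo^1]=0$.

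The main computational step is to test $\partial\overline\partial\omega=0$ against well-chosen tangent vectors. Concretely, fix an orthonormal $J$-adapted basis compatible with the central series, pick $X\in\ngo$ and $Z\in\ngo^2\subseteq\ngo^1$ with $Z=[X,Y]$ modulo $\ngo^2$-type corrections, and evaluate the four-form $\dd\dd^c\omega$ on the quadruple $(X,JX,Y,JY)$ or on a carefully chosen real $4$-tuple spanning a $J$-invariant subspace. Using the Koszul-type formula for $\dd\omega$ in terms of brackets and the compatibility $\omega(J\cdot,J\cdot)=\omega(\cdot,\cdot)$, the vanishing of this particular component should reduce — after the lower-step brackets are dealt with — to an expression of the form $\sum \|[\text{something}, \ngo^1]\|^2 \le 0$ forcing $[\ngo,\ngo^1]=0$. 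This is the sign-definiteness trick that underlies the dimension $4,6,8$ classifications and the abelian-case result of \cite{FTV21}; the novelty needed here is to run it \emph{without} assuming $J$ abelian or $J$ nilpotent, so one has to control the mixed terms coming from the $(2,0)+(0,2)$ part of $\dd\omega$ and from the non-integrability-free pieces.

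After establishing $[\ngo,\ngo^1]=0$ one concludes $\ngo^2=0$, i.e. $\ngo$ is $2$-step nilpotent, and if moreover $\ngo^1=0$ then $\ngo$ is abelian and the nilmanifold is a torus; this gives the stated dichotomy. I expect the main obstacle to be exactly the reduction in the previous paragraph: choosing the test vectors so that \emph{all} contributions except a manifestly sign-definite one cancel, which likely requires first a normalization of the metric within its $J$-compatible family (a "minimal" or "stratum" adapted metric, in the spirit of the moment-map / real GIT techniques for nilmanifolds), or an induction on the step length peeling off one central layer at a time. A secondary subtlety is that $\ngo^k$ and the $J$-invariant central series may differ, so some bookkeeping is needed to ensure the quadratic form one extracts is genuinely the obstruction to $3$-step behaviour and not an artifact of the filtration; handling that cleanly is where the proof will spend most of its effort.
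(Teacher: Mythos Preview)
Your proposal correctly identifies the overall shape of the argument --- a sign-definite consequence of $dc=0$ evaluated on $J$-invariant quadruples --- but it is missing the organizing principle that makes the argument go through. The direct plan of testing $\partial\bar\partial\omega$ on $(X,JX,Y,JY)$ with $X,Y$ chosen relative to the descending central series is essentially the strategy of \cite[Theorem~1.2]{EFV12}, and that is precisely the argument whose gap motivates the paper; the mixed terms you flag as ``the main obstacle'' really do not cancel in general, and neither of your two suggested fixes (a GIT/minimal-metric normalization, or an induction on the nilpotency step by peeling off a central layer) is what the paper does.

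The paper's actual mechanism is different: it runs induction on the \emph{dimension}, not on the step. Salamon's structure theorem for complex nilmanifolds \cite[Corollary~1.4]{Salamon01} produces a $J$-invariant ideal $\ngo\subset\ggo$ of codimension two, with $\ggo=\mathrm{span}\{e_1,e_2\}\oplus\ngo$ and $Je_1=e_2$. The first key lemma (Proposition~\ref{ideal}) shows that the SKT property \emph{descends} to $(\ngo,J|_\ngo,\ip|_\ngo)$; this is what makes the induction possible, and it is not obvious --- one has to compute how $c$ and $dc$ decompose under $\Lambda^*\ggo^*=\Lambda^*\ngo^*\otimes\Lambda^*\langle e^1,e^2\rangle$. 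By induction $\ngo$ is then at most $2$-step, so the whole problem reduces to analysing the extension data $A=\ad(e_1)_\ngo$, $B=\ad(e_2)_\ngo$, $X=[e_1,e_2]$. Only at this stage does the sign-definiteness trick enter, and it is applied in a targeted way: evaluating $dc(e_1,e_2,Z,JZ)$ for $Z$ in the center $\zg$ of $\ngo$ gives, after using integrability and the Jacobi identity $[A,B]=\ad(X)$, the identity $0=|A_\zg|^2+|B_\zg|^2$ (Lemma~\ref{A3=0}). A second, more algebraic step --- using that $A_\vg,B_\vg$ are commuting nilpotents together with the auxiliary Lemma~\ref{centro} (for a $2$-step SKT algebra, $Y\in\zg\Leftrightarrow[Y,JY]=0$) --- then forces $A_\vg=B_\vg=0$ and $X\in\zg$, hence $\ggo$ is $2$-step.

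In short: the gap in your outline is the absence of the codimension-two $J$-invariant ideal and the heredity of SKT to it. Without that reduction you are back to the situation of \cite{EFV12}, and your proposed workarounds (metric normalization, step induction via central quotients) would require substantial new arguments that you have not supplied; in particular, quotienting by the center is not known to preserve SKT, and no moment-map normalization is used anywhere in the paper.
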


According to Theorem \ref{main theorem}, the next move is to understand invariant SKT structures on $2$-step nilmanifolds. The really hard problem is to reach new examples in higher dimensions, and the lack of them motivated us to develope a method to construct families of invariant SKT structures on nilmanifolds in higher dimensions starting with low dimensional ones (see Section \ref{contructionofexamples}). This machinery provides explicit examples in every complex dimension. Moreover, as far as we know, we give the first examples of SKT structures on nilmanifolds with non-abelian complex structure in higher dimensions.

We now give some insight into our main results. Any invariant SKT structure on a nilmanifold is determined by the following infinitesimal data, which we call an \emph{SKT Lie algebra}: a nilpotent Lie algebra $\ggo$, a complex structure $J$ on $\ggo$ and an inner product on $\ggo$ satisfying a system of equations on $\ggo$ due to the SKT condition.  The key idea in the proof of Theorem \ref{main theorem} is to write $\ggo=\vspan\{ e_1,e_2\} \oplus \ngo$, as the orthogonal sum of a subspace and an ideal $\ngo$, where both spaces are $J$-invariant (see \cite[Corollary 1.4]{Salamon01}), and to prove that $(\ngo,J|_{\ngo},\ip|_{\ngo})$ is also SKT (see Section \ref{sec-conj}). Then, $\ggo$ is determined by   
 
\begin{equation*}
A:=\ad(e_1)_{\ngo}, \quad B:=\ad(e_2)_{\ngo}, \quad X:=[e_1,e_2],\quad \mbox{and} \quad [\cdot,\cdot]_{\ngo}, 
\end{equation*} where $\ad(e_i)_{\ngo}$ denotes the projection of $\ad(e_1)|_{\ngo}$ onto $\ngo$, for $i=1,2$. If we apply induction on $n$ to $\dim\ggo = 2n$, then by induction hypothesis, the ideal $\ngo$ is forced to be abelian or $2$-step nilpotent (see Section \ref{sec-ABX}). Therefore, $\ngo$ can be decomposed as $\ngo=\vg \oplus \zg,$ 
where  $\zg$ is the center of $\ngo$ and $\vg:=\zg^{\perp}$ ($\ngo=\zg$ when $\ngo$ is abelian). Since $A,B\in\Der(\ngo)$, then
\[
A=\left[\begin{matrix}A_\vg & 0\\ \ast & A_\zg \end{matrix}\right], \qquad B=\left[\begin{matrix}B_\vg & 0\\ \ast & B_\zg \end{matrix}\right].
\] 
We first show that $A_\zg=0$ and $B_\zg=0$ by using the SKT condition (see Corollary \ref{A=0-abelian} for an abelian $\ngo$ and Lemma \ref{A3=0} for a $2$-step nilpotent $\ngo$). This fact together with the nilpotency of $\ggo$ and the integrability of $J$ are the ingredients to demonstrate that $A_\vg=0$, $B_\vg=0$ and $X\in\zg$, which proves that $\ggo$ is at most $2$-step nilpotent.

Our second main result is a method that provides new explicit examples of SKT Lie algebras (see Section \ref{contructionofexamples}). We start with two SKT $2$-step nilpotent Lie algebras $(\ngo_1,J_1,\ip_1)$ and $(\ngo_2,J_2,\ip_2)$  of dimensions $n_1$ and $n_2$, respectively, satisfying 
\[
\ngo_i=\vg_i\oplus \zg_i\quad \mbox{and} \quad \dim{\zg_i}>\dim{[\ngo_i,\ngo_i]}, \quad i=1,2,
\] where $\zg_i$ is the center of $\ngo_i$ and $\vg_i:=\zg_i^{\perp}$, $i=1,2$, and we construct a new SKT Lie algebra of dimension $n_1+n_2+2$ by setting $\ggo= \ngo_1\oplus\ngo_2\oplus \la Z, W\ra$ with Lie bracket given by 
\begin{equation*}
\lb|_{\ngo_1\times\ngo_1}=\lb_{\ngo_1},\quad \lb|_{\ngo_2\times\ngo_2}=\lb_{\ngo_2},\quad [Z,W]=X_{n_1}+Y_{n_2},    
\end{equation*} 
where $X_{n_1}\in\zg_1\cap[\ngo_1,\ngo_1]^\perp$ and $Y_{n_2}\in\zg_2\cap[\ngo_2,\ngo_2]^\perp$. The complex structure is defined as
$$J=\left[\begin{smallmatrix}
\\J_1&&&\\&J_2&&\\&&0&-1\\&&1&0
\end{smallmatrix}\right],$$ and the inner product is the one that makes the above decomposition of $\ggo$ orthogonal while extending $\ip_1$ and $\ip_2$. The SKT Lie algebra $(\ggo,J,\ip)$ is irreducible, in the sense that it is not a product of two SKT Lie algebras, despite $\ggo$ is decomposable (see Section \ref{construction}). %The proof is a simple computation involving the SKT conditions of $(\ngo_1,J_1,\ip_1)$ and $(\ngo_2,J_2,\ip_2)$.

The organization of this article is as follows. In Section \ref{preliminaries} we review some basic facts about left-invariant SKT structures on Lie groups. In Section \ref{SKT nil} we prove some useful results. Then, we apply these results in Section \ref{sec-conj}, which is devoted to the proof of Theorem \ref{main theorem}. Finally, we present a construction in Section \ref{contructionofexamples} and explicit examples are provided. 

\vs \noindent {\it Acknowledgements.} We would like to thank Jorge Lauret and Ramiro A.\ Lafuente for fruitful discussions and useful comments on a first draft of this article. We are also grateful to Anna Fino for her careful reading of the paper. 

\section{Preliminaries}\label{preliminaries}

Given $(M^{2n},J)$ a differentiable manifold of real dimension $2n$ endowed with a complex structure, a Riemannian metric $g$ on $M$ is said to be {\it Hermitian} if $g(J\cdot, J\cdot) = g(\cdot, \cdot).$ The pair $(J,g)$ is called a {\it Hermitian structure} and $\omega(\cdot, \cdot)=g(J\cdot, \cdot)$ is the fundamental $2$-form associated to the pair. The {\it Bismut (or Strominger) connection} $\nabla^B$ on $M$ is the unique Hermitian connection (that is, $J$ and $g$ are parallel) with totally skew-symmetric torsion. That is, the tensor 
\begin{equation}\label{def-c}
    c(U,Y,Z) := g(U, T^B(Y,Z))
\end{equation} is a 3-form, where $T^B(Y,Z) = \nabla^B_Y Z - \nabla^B_Y Z - [Y,Z]$ is the torsion of $\nabla^B$ (see \cite{S86,Bis89}). The metric $g$ (or $\omega$) is called {\it strong K\"ahler with torsion (SKT)} or {\it pluriclosed} if its fundamental $2$-form satisfies $\partial \bar \partial \omega = 0$, or equivalently, the $3$-form $c$ is closed. In this case, $(J,g)$ is called a {\it SKT-structure} and the triple $(M,J,g)$ is said to be SKT.

We are interested in the study of {\it invariant SKT-structures} on Lie groups. Here, the universal cover $\tilde M$ of $M$ is diffeomeophic to a simply-connected Lie group $G$  and $\pi^* J$ and $\pi^*g$ are left-invariant tensors defining a Hermitian structure on $G$, where $\pi: G \to M$ denotes the universal covering map.

\subsection{Nilpotent Lie groups and Lie algebras}

Given a Lie group $G$ with Lie algebra $(\ggo,[\cdot,\cdot])$, for each $X\in\ggo$ we define the \emph{adjoint map} as the linear map $\ad(X):\ggo \to \ggo$, given by $\ad(X)(Y)= [X,Y]$ and we denote by $\zg(\ggo)$ the \emph{center} of $\ggo$, that is, $\zg(\ggo)=\{X \in \ggo \mid \ad(X) = 0\}$. 

For a Lie algebra $(\ggo,[\cdot,\cdot])$, we define its \emph{descending central series} by:
\begin{equation*}
    \ggo_0=\ggo, \quad \ggo_i=[\ggo,\ggo_{i-1}], \mbox{ for } i\geq 1.
\end{equation*}
A Lie algebra $\ggo$ is called \emph{nilpotent} if there exists $k \in \NN$ such that $\ggo_k=0$. In addition, if $\ggo_k=0$ and $\ggo_{k-1}\neq 0$, the Lie algebra is said to be \emph{$k$-step nilpotent}. A Lie group $G$ is \emph{($k$-step) nilpotent} if its Lie algebra is ($k$-step) nilpotent.

From now on, we simply denote by $\ggo$ the Lie algebra $(\ggo,[\cdot,\cdot])$.

\subsection{Hermitian structures on Lie groups}
Left-invariant Hermitian structures on simply-connected Lie groups $(G,J,g)$ are completely determined by $(\ggo,J(e),g(e))$, where $e$ is the identity of $G$. Here, if we denote by $J:=J(e)$ and $\ip:=g(e)$, then $J$ is a linear endomorphism $ J: \ggo \to \ggo$ satisfying $J^2 = -\Id_\ggo$ and the integrability condition
\[
[J\cdot,J\cdot]=[\cdot,\cdot]+J [J\cdot,\cdot]+J [\cdot,J\cdot],
\] and $\ip : \ggo \times \ggo \to \RR$ is an inner product on $\ggo$ such that $\la J \cdot , J \cdot \ra = \la \cdot ,\cdot \ra$. 
 
From now on, we denote the Hermitian manifold $(G,J,g)$ by $(\ggo,J,\ip).$

\subsection{SKT metrics on Lie groups}
The torsion $3$-form of the Bismut connection of a left-invariant Hermitian manifold $(\ggo,J,\ip)$ can be computed by (see \cite[(3.2)]{EFV12})
\begin{equation}\label{eqn_c}
  c(U,Y,Z) = -\la[JU, JY], Z\ra - \la[JY,JZ], U\ra - \la[JZ,JU], Y\ra, \qquad U, Y, Z \in \ggo,
\end{equation}
and its exterior derivative is thus given by

\begin{align}
  dc(W,U,Y,Z) =& \quad\,\la[J[W,U], JY], Z\ra                 + \la[JY,JZ], [W,U]\ra +      \la[JZ,J[W,U]],                 Y\ra\nonumber\\
                & - \la[J[W,Y], JU], Z\ra - \la[JU,JZ], [W,Y]\ra - \la[JZ,J[W,Y]], U\ra\nonumber\\
                & + \la[J[W,Z], JU],Y\ra + \la[JU,JY],[W,Z]\ra + \la[JY,J[W,Z]],U\ra\label{eqn_dc} \\
                & +\la[J[U,Y], JW], Z\ra + \la[JW,JZ],[U,Y]\ra + \la[JZ,J[U,Y]],W\ra\nonumber\\
                & - \la[J[U,Z], JW],Y\ra - \la[JW,JY],[U,Z]\ra - \la[JY,J[U,Z]], W\ra\nonumber\\
                & + \la[J[Y,Z], JW], U\ra + \la[JW,JU],[Y,Z]\ra + \la[JU,J[Y,Z]],W\ra .\nonumber \end{align}

Then, the SKT condition $dc = 0$ can be written as a system of equations on $\ggo$ involving the Lie bracket, the complex structure and the inner product.

From now on, we will say that $(\ggo,J,\ip)$ is SKT or an SKT Lie algebra if it is a Hermitian manifold satisfying \eqref{eqn_dc}.

\section{SKT nilmanifolds}\label{SKT nil}

The aim of this section is to prove two helpful results for the following sections. We also recall a result from \cite{EFV12} and set up some notation.   

\begin{proposition}\cite[Proposition 3.1]{EFV12} \label{centro-Jinv}
If $(\ggo,J,\ip)$ is SKT with $\ggo$  nilpotent, then $\zg(\ggo)$ is $J$-invariant. 
\end{proposition}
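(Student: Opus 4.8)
The plan is to fix $Z\in\zg(\ggo)$ and prove that $JZ\in\zg(\ggo)$, i.e.\ that $D:=\ad(JZ)\colon\ggo\to\ggo$ is identically zero. Two elementary remarks set the stage. First, since $[Z,\cdot]=0$, the integrability identity for $J$ evaluated on the pair $(JZ,Y)$ collapses to $[JZ,JY]=J[JZ,Y]$; that is, $D$ commutes with $J$ (in particular $DZ=0$). Second, since $\ggo$ is a nilpotent Lie algebra, $D=\ad(JZ)$ is a nilpotent endomorphism of $\ggo$; more generally, if $\ggo$ is $k$-step nilpotent then any product of $k$ of the maps $\ad(\cdot)$ vanishes, so $\tr$ of any product of two or more $\ad$-maps is zero — this is the quantitative form of nilpotency I would use.

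The core of the argument is to unwind the SKT condition $dc=0$. Contracting the Chevalley--Eilenberg expression \eqref{eqn_dc} for $dc$ with $Z$ in one of its slots, every summand containing a bracket $[Z,\cdot]$ drops out because $Z$ is central, and the surviving terms can be reorganized, via the Jacobi identity and the closed formula
\[
c(Z,Y,W)=\langle DY,JW\rangle-\langle DW,JY\rangle-\langle[JY,JW],Z\rangle
\]
(which itself is just \eqref{eqn_c} rewritten using $[Z,\cdot]=0$, $DJ=JD$, and the $\ip$-orthogonality of $J$), into an identity holding for all $Y,W\in\ggo$. I would then specialize the free arguments and sum over an orthonormal basis of $\ggo$ adapted to $J$: the contributions built purely from the Lie bracket turn into traces of products of $\ad$-maps and vanish by the nilpotency input above, and what survives is meant to force $\langle DY,Y\rangle=0$ for every $Y$, i.e.\ that $D$ is skew-symmetric with respect to $\ip$.

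Once $D$ is known to be skew-symmetric, the conclusion is immediate: a skew-symmetric endomorphism of a Euclidean space is semisimple with purely imaginary eigenvalues, hence a skew-symmetric \emph{nilpotent} endomorphism must vanish. Therefore $\ad(JZ)=D=0$, so $JZ\in\zg(\ggo)$, and $\zg(\ggo)$ is $J$-invariant. The delicate step, and the one I expect to be the main obstacle, is the bookkeeping in the middle paragraph: choosing the arguments and the contraction so that, after tracing, every term of $dc=0$ except a sign-definite multiple of $\|D\|^2$ (equivalently of the symmetric part of $D$) assembles into traces that die by nilpotency — and it is precisely here, rather than at the level of unimodularity alone, that the hypothesis that $\ggo$ is nilpotent is used.
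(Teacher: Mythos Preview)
The paper does not supply a proof of this proposition; it is quoted verbatim from \cite[Proposition~3.1]{EFV12} and used as a black box. So there is no argument in the paper against which to compare your outline.

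On your approach itself: the framing is right --- for $Z\in\zg(\ggo)$ set $D:=\ad(JZ)$, observe from integrability that $DJ=JD$, note $D$ is nilpotent, and try to extract from $dc=0$ that $D=0$. The final implication (skew-symmetric and nilpotent $\Rightarrow$ zero) is also fine. The gap is in the middle step. If you evaluate $dc(Z,JZ,Y,JY)$ and simplify using $[Z,\cdot]=0$, $DJ=JD$, and the fact that $D$ is a derivation, the identity that drops out is
\[
0 \;=\; \la D^2Y, Y\ra \;+\; \|DY\|^2 \;+\; \la D[Y,JY],\, Z\ra, \qquad \forall\, Y\in\ggo.
\]
The last term is \emph{not} a trace of a product of adjoint maps, so your proposed mechanism (``the contributions \ldots\ turn into traces of products of $\ad$-maps and vanish by nilpotency'') does not dispose of it. Summing over an orthonormal basis only gives $\|D\|^2=-\la DV,Z\ra$ with $V=\sum_i[e_i,Je_i]$, and there is no trace argument forcing the right-hand side to vanish. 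Your plan to land on ``$D$ is skew'' is therefore not reached by the route you describe.

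What does work is to keep the displayed identity \emph{pointwise} and run a downward induction along the lower central series. Since $\ggo_{k-1}\subset\zg(\ggo)$ one has $D|_{\ggo_{k-1}}=0$. If $D|_{\ggo_{i+1}}=0$ and $Y\in\ggo_i$, then $DY\in\ggo_{i+1}$ and $[Y,JY]\in[\ggo_i,\ggo]\subset\ggo_{i+1}$, so $D^2Y=0$ and $D[Y,JY]=0$; the identity then forces $\|DY\|^2=0$. Hence $D|_{\ggo_i}=0$ for all $i$, in particular $D=0$ and $JZ\in\zg(\ggo)$. This avoids the tracing altogether and is where nilpotency is genuinely used --- through the central series, not through vanishing of traces.
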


\begin{notation} \label{projection}
Let $V$ be a vector space. If $T\in\glg(V)$ and $W$ is a subspace of $V$, then $T_W$ denotes the projection of $T|_W$ onto $W$.
\end{notation} 

\begin{proposition}\label{ideal}
If $(\ggo,J,\ip)$ is SKT and $\ngo$ is a  $J$-invariant ideal of $\ggo$ of co-dimension $2$, then $(\ngo,J_\ngo,\ip|_\ngo)$ is SKT. 
%(no usa n 2-pasos)
\end{proposition}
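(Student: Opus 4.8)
The plan is to reduce the SKT condition on $\ngo$ to the already-known SKT condition on $\ggo$ by choosing suitable test vectors. Concretely, write $\ggo = \vspan\{e_1, e_2\} \oplus \ngo$ orthogonally, where $\{e_1, e_2\}$ spans a $J$-invariant complement (so we may take $e_2 = Je_1$ after rescaling). Since $\ngo$ is $J$-invariant and an ideal, the restricted data $(J_\ngo, \ip|_\ngo)$ is clearly a Hermitian structure on the Lie algebra $\ngo$ (the bracket on $\ngo$ being the restriction of $\lb$, which closes since $\ngo$ is a subalgebra; note integrability of $J_\ngo$ is inherited because $\ngo$ is $J$-invariant). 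What must be checked is the closedness equation $dc_\ngo = 0$, i.e. formula \eqref{eqn_dc} with all entries taken in $\ngo$ and all brackets, $J$, and $\ip$ replaced by their $\ngo$-restrictions.

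The key observation is that for $W, U, Y, Z \in \ngo$, one has $[W,U] \in \ngo$ already (since $\ngo$ is a subalgebra), so the ambient bracket $[W,U]$ and the intrinsic bracket $[W,U]_\ngo$ coincide; similarly $J$ applied to an element of $\ngo$ stays in $\ngo$ and agrees with $J_\ngo$, and $\ip$ agrees with $\ip|_\ngo$ on such elements. Therefore every one of the eighteen terms in $dc(W,U,Y,Z)$ computed in $\ggo$ equals the corresponding term in $dc_\ngo(W,U,Y,Z)$ computed intrinsically in $\ngo$. The only subtlety is that expressions like $[J[W,U], JY]$ involve a bracket $[\,\cdot\,, JY]$ where the first slot $[W,U]$ lies in $\ngo$ but one might worry the result lands outside $\ngo$ — however $\ngo$ is an \emph{ideal}, so $[\ggo, \ngo] \subseteq \ngo$, hence this too stays in $\ngo$ and agrees with the intrinsic computation. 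Thus $dc_\ngo(W,U,Y,Z) = dc(W,U,Y,Z) = 0$ for all $W,U,Y,Z \in \ngo$, the last equality because $(\ggo,J,\ip)$ is SKT.

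So the proof is essentially a naturality/restriction argument: first spell out that $(\ngo, J_\ngo, \ip|_\ngo)$ is a Hermitian Lie algebra (integrability and compatibility are immediate from $J$-invariance of $\ngo$), then observe term-by-term that the expression \eqref{eqn_dc} for $dc_\ngo$ on inputs from $\ngo$ is literally the restriction of \eqref{eqn_dc} for $dc$, using crucially that $\ngo$ is both a subalgebra (so brackets of pairs from $\ngo$ stay in $\ngo$) and an ideal (so brackets of the form $[\,\text{element of }\ggo\,, \,\text{element of }\ngo\,]$ stay in $\ngo$, which is what is needed to handle the "outer" brackets $[J[\cdot,\cdot], J\cdot]$). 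Since $dc \equiv 0$ on $\ggo$, in particular $dc_\ngo \equiv 0$.

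The main obstacle, such as it is, is purely bookkeeping: one must make sure that \emph{every} bracket appearing in \eqref{eqn_dc}, when all four arguments lie in $\ngo$, has both of its slots in $\ngo$ or at least has one slot in $\ngo$ with the other in $\ggo$ — so that the ideal property suffices and no term secretly requires knowing how $\lb$ behaves on the $e_i$ directions. Inspecting \eqref{eqn_dc}: inner brackets $[W,U]$, $[W,Y]$, etc. are pairs from $\ngo$ hence in $\ngo$; outer brackets are of the form $[J[\cdot,\cdot], J\cdot]$ with $J[\cdot,\cdot] \in \ngo$ and $J\cdot \in \ngo$, so in fact both slots lie in $\ngo$ and even the weaker subalgebra property would do. Hence there is genuinely nothing that escapes $\ngo$, and the restriction of the identity $dc = 0$ is exactly $dc_\ngo = 0$. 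I would also remark that co-dimension $2$ is not essential for this particular statement — any $J$-invariant ideal would work — but it is the case needed later.
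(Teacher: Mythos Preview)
Your argument is correct. Both you and the paper are extracting the same fact --- that $dc$ evaluated on four vectors from $\ngo$ coincides with $d_\ngo c_\ngo$ on those vectors --- but by different packaging. The paper works dually: it decomposes $c = \alpha\wedge e^1 + \beta\wedge e^2 + \gamma\wedge e^{12} + c_\ngo$, expresses the ambient exterior derivative on $\Lambda^*\ngo^*$ in terms of $d_\ngo$ and the derivation operators $\theta(A)$, $\theta(B)$, and then reads off the $\Lambda^4\ngo^*$-component of $0=dc$ as $d_\ngo c_\ngo$. Your route is more elementary: you inspect the eighteen terms of \eqref{eqn_dc} directly and observe that, with all four inputs in the $J$-invariant subalgebra $\ngo$, every application of $J$ and every bracket stays inside $\ngo$, so the expression is literally $d_\ngo c_\ngo(W,U,Y,Z)$. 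This buys brevity and also makes transparent --- as you note at the end --- that only the subalgebra property is actually used, not the ideal property; the paper's form-level computation, while heavier, has the virtue of displaying the remaining components of $dc$ (the constraints coupling $c_\ngo$ to $A$, $B$, $X$), though these are not exploited in the proof itself. Your remark that codimension~$2$ is inessential matches the paper's Remark~\ref{ideal-coro}.
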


\begin{proof}
Here and subsequently, $d_\ngo$ stands for the exterior derivative of the Lie algebra $(\ngo,[\cdot,\cdot]_\ngo)$. We first compute $d\alpha$ for $\alpha\in\Lambda^1\ngo^*$. To do this, we take $\{e_1,e_2\}$ in the orthogonal complement of $\ngo$ in $\ggo$, and $\{e^1,e^2\}$ the respective dual $1$-forms. Without loss of generality, we can assume that $Je_1=e_2$. 
For simplicity of notation, let $A$, $B$ and $X$ stand for $\ad(e_1)_\ngo$, $\ad(e_2)_\ngo$ and the projection of $[e_1,e_2]$ onto $\ngo$, respectively. It is straightforward to prove that
\begin{equation}\label{deriv}
d \alpha = e^1\wedge \theta(A)\alpha +e^2\wedge \theta(B)\alpha - \alpha(X) e^{12}  + d_\ngo\alpha, \quad\forall \alpha\in\Lambda^1\ngo^*,
\end{equation}
where $\theta:\glg(\ngo)\longrightarrow\End(\Lambda^k\ngo^*)$ denotes the representation obtained as the derivative of the natural left $\Gl(\ngo)$-action on each $\Lambda^k\ngo^*$, which is given by,
$$
\theta(B)\gamma = \ddt\Big|_0 e^{tB}\cdot\gamma = -\left(\gamma(B\cdot,\dots,\cdot) + \dots +\gamma(\cdot,\dots,B\cdot)\right), \qquad\forall\gamma\in\Lambda^k\ngo^*,\;B\in\glg(\ngo).
$$
Equation \eqref{deriv} can be generalized to obtain the exterior derivative of any $k$-form in $\ngo^*$. On the other hand, from \eqref{eqn_dc}, it is clear that there exist $\alpha,\beta\in\Lambda^2\ngo^*$ and $\gamma\in\Lambda^1\ngo^*$ such that
\[c = \alpha \wedge e^1 + \beta\wedge e^2 + \gamma\wedge e^{12} +c_\ngo,     \]
where $c_\ngo$ is the torsion $3$-form of $\ngo$ (see \eqref{def-c}).
We claim that $0=d_\ngo c_\ngo\in\Lambda^4\ngo^*$. Indeed, from \eqref{deriv}, we obtain that
\begin{align*}
    0=&dc = d\alpha \wedge e^1 + d\beta\wedge e^2 + d\gamma\wedge e^{12} +dc_\ngo\\
    =& (-\theta(B)\alpha+ \theta(A)\beta + d_\ngo\gamma+\delta)\wedge e^{12}+(d_\ngo\alpha -\theta(A)c_\ngo)\wedge e^1 +(d_\ngo\beta-\theta(B)c_\ngo)\wedge e^2+d_\ngo c_\ngo,
\end{align*}
where $\delta\in\Lambda^2\ngo^*$ is such that $dc_\ngo=e^1\wedge\theta(A)c_\ngo+e^2\wedge\theta(B)c_\ngo+\delta\wedge e^{12}+d_\ngo c_\ngo$. 
This equation yields $d_\ngo c_\ngo=0$, and therefore $(\ngo,J_\ngo,\ip_\ngo)$ is SKT. 
\end{proof}

\begin{remark}\label{ideal-coro}
Note that the $2$-codimensional hypotesis on $\ngo$ can be removed. The proof follows in the same way, but the notation can get tricky. 
\end{remark}

\begin{lemma}\label{centro} Let $(\ngo,J,\ip)$ be an SKT Lie algebra where $\ngo$ is $2$-step nilpotent. Then, $Y\in\zg(\ngo)$ if and only if $[Y,JY]=0$. 
\end{lemma}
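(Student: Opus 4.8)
The statement to prove is: for an SKT Lie algebra $(\ngo,J,\ip)$ with $\ngo$ being $2$-step nilpotent, one has $Y\in\zg(\ngo)$ if and only if $[Y,JY]=0$. One implication is trivial: if $Y\in\zg(\ngo)$ then $\ad(Y)=0$, so in particular $[Y,JY]=0$. The content is therefore entirely in the converse, and the plan is to show that $[Y,JY]=0$ forces $\ad(Y)=0$.

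The natural tool is the closed torsion $3$-form $c$ and its vanishing exterior derivative $dc=0$, specialized cleverly. Since $\ngo$ is $2$-step nilpotent, we have $[\ngo,\ngo]\subseteq\zg(\ngo)$, and the formula \eqref{eqn_c} for $c$ simplifies considerably because several brackets of bracket-elements vanish. The key computation I expect: plug well-chosen arguments into \eqref{eqn_dc} — natural candidates are $dc(Y,JY,U,JU)$ or $dc(Y,JY,U,Z)$ for arbitrary $U,Z\in\ngo$ — and use both the $2$-step hypothesis (so that any triple bracket dies) and the integrability of $J$ (the Nijenhuis identity from the preliminaries) to collapse the $18$-term expression. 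With the assumption $[Y,JY]=0$, and after replacing $Y\mapsto JY$ where convenient (note $[JY,J(JY)]=[JY,-Y]=0$ as well, so the hypothesis is symmetric in $Y,JY$), I anticipate the surviving terms package into something like $\|\ad(Y)|_{?}\|^2$ plus a term involving $[Y,\cdot]$ paired with itself, forcing $\ad(Y)=0$ on a suitable complement, and then on all of $\ngo$ by $J$-invariance considerations.

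A cleaner route, which I would try first, is to use \eqref{eqn_c} directly to compute $c(Y,JY,Z)$ for $Z\in\ngo$. Using $2$-step nilpotency and $[Y,JY]=0$, this should reduce to an expression in $[Y,JZ]$ and $[JY,JZ]$; combined with integrability, $[Y,JZ]+[JY,Z]$ relates to $[Y,Z]$-type terms, and one may be able to extract $\la[Y,Z],\cdot\ra=0$ for all $Z$. Alternatively, one can invoke Lemma-style positivity: consider the quadratic form $Z\mapsto dc(Y,JY,Z,JZ)$, show it is (up to sign) a sum of squares of the form $\|[Y,Z]\|^2$ or $\la[Y,Z],[Y,JZ]\ra$-type terms, and conclude. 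The main obstacle will be the bookkeeping in \eqref{eqn_dc}: identifying exactly which of the eighteen terms survive under the combined constraints ($2$-step, $[Y,JY]=0$, integrability, and the symmetry $[JY,J(JY)]=0$), and checking that the surviving combination is manifestly sign-definite so that each individual bracket $[Y,Z]$ must vanish. Once $\ad(Y)=0$ is established, $Y\in\zg(\ngo)$ by definition, completing the proof.
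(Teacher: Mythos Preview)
Your plan is correct and matches the paper's proof: one computes $dc(W,JW,Y,JY)$, uses $2$-step nilpotency together with the $J$-invariance of the center (Proposition~\ref{centro-Jinv}) to collapse the eighteen terms to the identity $2\la[Y,JY],[W,JW]\ra=\|[W,Y]\|^2+\|[W,JY]\|^2+\|[JW,Y]\|^2+\|[JW,JY]\|^2$, whence $[Y,JY]=0$ forces $[W,Y]=0$ for all $W$. Two small remarks: integrability of $J$ is never invoked in this computation (the simplification comes entirely from $[\ngo,\ngo]\subseteq\zg(\ngo)$ and $J\zg(\ngo)=\zg(\ngo)$), and your proposed ``cleaner first route'' via $c(Y,JY,Z)$ alone cannot succeed, since the SKT hypothesis constrains $dc$, not $c$.
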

\begin{proof}
Let $W,Y \in \ngo$, then by (\ref{eqn_dc}),
\begin{equation*}\label{eqn_dc_WZ}
\begin{array}{lcl}
  dc(W,JW,Y,JY)   &=& + \la[J[W,JW], JY], JY\ra - \la[JY,Y], [W,JW]\ra - \la[Y,J[W,JW]], Y\ra\\
                && + \la[J[W,Y], W], JY\ra - \la[W,Y], [W,Y]\ra + \la[Y,J[W,Y]], JW\ra\\
                && - \la[J[W,JY], W],Y\ra - \la[W,JY],[W,JY]\ra + \la[JY,J[W,JY]],JW\ra \\
                && + \la[J[JW,Y], JW], JY\ra - \la[JW,Y],[JW,Y]\ra - \la[Y,J[JW,Y]],W\ra\\
                && - \la[J[JW,JY], JW],Y\ra - \la[JW,JY],[JW,JY]\ra - \la[JY,J[JW,JY]], W\ra\\
                && + \la[J[Y,JY], JW], JW\ra - \la[JW,W],[Y,JY]\ra - \la[W,J[Y,JY]],W\ra . 
\end{array}
\end{equation*}
Since $[\ngo,\ngo]\subseteq \zg(\ngo)$ and $\zg(\ngo)$ is $J$-invariant by Proposition \ref{centro-Jinv}, we have that  

\begin{equation*}
\begin{array}{lcl}
 0=dc(W,JW,Y,JY)   &=&  - \la[JY,Y], [W,JW]\ra  - \la[W,Y], [W,Y]\ra  - \la[W,JY],[W,JY]\ra \\
                &&  - \la[JW,Y],[JW,Y]\ra  - \la[JW,JY],[JW,JY]\ra - \la[JW,W],[Y,JY]\ra  . 
\end{array}
\end{equation*}
That means
\[
-2 \la[Y,JY], [W,JW]\ra = - \|[W,Y]\|^2 - \|[W,JY]\|^2 - \|[JW,Y]\|^2 - \|[JW,JY]\|^2.
\]
Therefore, $[Y,JY]=0$ if and only if $[W,Y]=0$ for all $W \in \ngo$, that is $Y \in \zg(\ngo).$ 
\end{proof}

\section{Proof of Theorem \ref{main theorem}}\label{sec-conj}
Let $(\ggo,J,\ip)$ be a $2n$-dimensional real nilpotent Lie algebra endowed with a Hermitian structure. Using \cite[Corollary 1.4]{Salamon01}, there exists an orthonormal basis $\{e^1,\cdots,e^{2n}\}$ of $\ggo^*$ satisfying that $Je^1=e^2$ and
\begin{equation}\label{eq_bracket}
d e^i \in \sum_{j,k<i} c_{jk}^i e^{jk},
\end{equation} where $-c_{jk}^i$ denote the structural constants of the Lie bracket on $\ggo$. In particular, $de^1=0$, $de^2=0$ and $\ngo:=\vspan\{ e_1,e_2\}^{\perp}$ is a $J$-invariant ideal of $\ggo$.
Then, the Lie bracket of $\ggo$ is determined by
\begin{equation}\label{structure}
A:=\ad(e_1)_{\ngo}, \quad B:=\ad(e_2)_{\ngo}, \quad X:=[e_1,e_2],\quad \mbox{and} \quad [\cdot,\cdot]_{\ngo}. 
\end{equation} 
In particular,  $(\ngo,J_\ngo,\ip|_\ngo)$ is a $(2n-2)$-dimensional real nilpotent Lie algebra endowed with a Hermitian structure. Moreover, if $(\ggo, J, \ip)$ is SKT,  then $(\ngo,J_\ngo,\ip|_\ngo)$ turns out to be SKT by Proposition \ref{ideal}.

\begin{remark}\label{ABJ}
The integrability condition implies that
\[
   [Je_1,JY]=[e_1,Y]+J[Je_1,Y]+J[e_1,JY],\;\forall 
   Y\in\ngo, \mbox{ i.e. } [J,A]=J[B,J].
\]
\end{remark}

From now on, we will denote by $(\ggo_{A,B,X,\ngo},J,\ip)$ the Hermitian manifold such that $\ggo_{A,B,X,\ngo}$ is the nilpotent Lie algebra defined as in \eqref{structure}, $Je_1=e_2$, $J\ngo\subseteq\ngo$, and $\ip$ satisfies that $\la e_1,e_2\ra=0$ and $\{e_1,e_2\}\perp\ngo$.

\subsection{$2$-step  nilpotent ideal of codimension $2$ }\label{sec-ABX}

The aim of this section is to prove Theorem \ref{main theorem} for $(\ggo_{A,B,X,\ngo},J,\ip)$ in the case that $\ngo$ is $2$-step nilpotent. 

Assume that $(\ggo_{A,B,X,\ngo},J,\ip)$ is an SKT Lie algebra and $\ngo$ is $2$-step nilpotent. By Proposition \ref{ideal},  $(\ngo,J_\ngo,\ip|_\ngo)$ is SKT. Hence, we can decompose $\ngo$ as 
\[
\ngo=\vg \oplus \zg,
\] 
where  $\zg:=\zg(\ngo)$ is the center of $\ngo$ and $\vg:=\zg^{\perp}$. Note that $\zg$ and $\vg$ are invariant by $J$ and $[\vg,\vg]_\ngo\subseteq\zg$. According to the above decomposition, $J_{\ngo}$ is determined by $J_{\vg}$ and $J_{\zg}$ (see Notation \ref{projection}). In addition, since $A,B\in\Der(\ngo)$, then
\[
A=\left[\begin{matrix}A_\vg & 0\\ \ast & A_\zg \end{matrix}\right], \qquad B=\left[\begin{matrix}B_\vg & 0\\ \ast & B_\zg \end{matrix}\right].
\]

\begin{lemma}\label{A3=0}
If $(\ggo_{A,B,X,\ngo},J,\ip)$ is SKT, then $A_{\zg}=0$ and $B_{\zg}=0$. %In particular, if $A, B \neq 0$, then  $\zg$ coincides with the center of $\ggo_{A,B,X,\ngo}$. 
\end{lemma}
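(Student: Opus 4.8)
The plan is to extract a single well-chosen component of the equation $dc=0$ and show it forces $A_\zg = 0$ and $B_\zg = 0$. The natural test vectors are elements of $\vg$ and $\zg$: since $\ngo$ is $2$-step nilpotent, $[\ngo,\ngo]_\ngo \subseteq \zg$, and $\zg$ is $J$-invariant (Proposition \ref{centro-Jinv} applied to $\ngo$), many of the eighteen terms in \eqref{eqn_dc} will vanish. First I would observe that, since $de^1 = de^2 = 0$, the structure equations \eqref{eq_bracket} give a grading-type control: writing $\ngo = \vg \oplus \zg$, the derivation property of $A$ and $B$ together with $[\vg,\vg]_\ngo \subseteq \zg$ and $[\ngo,\zg]_\ngo = 0$ pins down the block-triangular form already displayed. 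The off-diagonal blocks (the $\ast$'s, i.e.\ $A$ maps $\vg \to \zg$) are harmless; it is $A_\zg = \pr_\zg \circ A|_\zg$ and $B_\zg$ that we must kill.

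The key computation is to evaluate $dc$ on a four-tuple of the form $(e_1, z, z', v)$ or better $(e_1, Z, JZ, \text{something})$ where $Z \in \zg$, exploiting Lemma \ref{centro} in $\ngo$ and Remark \ref{ABJ}. Concretely: take $Z \in \zg$ and compute $c(e_1, Z, \cdot)$ using \eqref{eqn_c}. Because $J$ preserves $\zg$ and $[\ngo,\zg]_\ngo = 0$, the only surviving brackets inside $c$ that involve $e_1$ are those of the form $[Je_1, JZ] = [e_2, JZ]$, whose $\ngo$-component is $B(JZ)$, and recalling $B(JZ) = \pr_\zg B(JZ) + \pr_\vg B(JZ)$; but $B \in \Der(\ngo)$ and $JZ \in \zg$ forces $B(JZ) \in \ker(\ad_\ngo) \cap (\text{stuff})$... more precisely one checks $[B(JZ), W]_\ngo = B[JZ,W]_\ngo - [JZ, BW]_\ngo = 0$ for all $W$, so $B(JZ) \in \zg$, i.e.\ $B_\zg(JZ) = B(JZ)$ and $\pr_\vg B(JZ) = 0$. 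Thus actually $\zg$ is $A$- and $B$-invariant as well and the $\ast$ blocks vanish on $\zg$ — the triangular picture is even better than stated, and one is left with $A_\zg, B_\zg$ as the genuine unknowns, which are again derivation-compatible. Then I would feed $dc(e_1, e_2, Z, JZ) = 0$ (or $dc$ on $(e_1, Z, W, JW)$ with $W \in \vg$) into \eqref{eqn_dc}: terms with $[W,Z]$-type brackets vanish, terms like $[W,U]$ with both in $\ngo$ reduce to $\ngo$-brackets landing in $\zg$, and the brackets $[e_1, \cdot] = A(\cdot) + (\text{an } e_2\text{-term})$, $[e_2,\cdot] = B(\cdot) + (\text{an }e_1\text{-term})$, so that the surviving contribution is a quadratic/bilinear expression in $A_\zg$, $B_\zg$ and the inner products on $\zg$. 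The aim is to reach an identity of the shape $\|A_\zg(\text{something})\|^2 + \|B_\zg(\text{something})\|^2 = 0$, exactly paralleling the trick in the proof of Lemma \ref{centro}, or at least $\la A_\zg \xi, \eta \ra = 0$ for enough $\xi, \eta$.

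The main obstacle I anticipate is bookkeeping: \eqref{eqn_dc} has eighteen terms, and after substituting $W = e_1$ (or $e_2$) one must carefully separate the $\zg$-valued and $\vg$-valued parts of each bracket and remember that $c_\ngo$ itself contributes (the decomposition $c = \alpha \wedge e^1 + \beta \wedge e^2 + \gamma \wedge e^{12} + c_\ngo$ from the proof of Proposition \ref{ideal} means $dc = 0$ is really four equations, and $A_\zg, B_\zg$ enter through $\theta(A) c_\ngo$ and $\theta(B) c_\ngo$). So the cleanest route is probably: use the already-established identity $d_\ngo c_\ngo = 0$ together with $d_\ngo \alpha = \theta(A) c_\ngo$ and $d_\ngo \beta = \theta(B) c_\ngo$ from that proof, and then analyze $\theta(A) c_\ngo$ directly — $c_\ngo$ is built from the $2$-step bracket of $\ngo$, which is essentially encoded by the maps $\ad_\ngo(v): \vg \to \zg$, and $\theta(A) c_\ngo = 0$ (forced by $d_\ngo \alpha$ being exact and a dimension/degree count on the nilpotent $\ngo$, or by reapplying \eqref{eqn_c}) translates into $A_\zg$ acting trivially against $c_\ngo$; since on an SKT $2$-step algebra $c_\ngo$ is nondegenerate enough on $\zg$ (this is where Lemma \ref{centro}, giving $[Y,JY] \ne 0$ for $Y \notin \zg$, is used to see the pairing $\la [W,Z'], \cdot \ra$ on $\zg$ has no kernel among the relevant directions), we conclude $A_\zg = 0$, and symmetrically $B_\zg = 0$. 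I would write the final argument in this $\theta$-operator language to avoid the eighteen-term expansion, and only fall back to the brute-force $dc(e_1, e_2, Z, JZ)$ evaluation if the operator argument leaves a gap.
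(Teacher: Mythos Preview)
Your instinct to evaluate $dc(e_1,e_2,Z,JZ)=0$ for $Z\in\zg$ is exactly what the paper does, and your observation that $\zg$ is $A$- and $B$-invariant is correct and used. But your preferred $\theta$-operator route has a genuine gap: from $d_\ngo\alpha=\theta(A)c_\ngo$ you cannot conclude $\theta(A)c_\ngo=0$ --- exactness of $d_\ngo\alpha$ is not vanishing, and no ``dimension/degree count'' on a nilpotent $\ngo$ supplies this. The subsequent nondegeneracy claim for $c_\ngo$ on $\zg$ via Lemma~\ref{centro} is also off: that lemma concerns vectors \emph{outside} $\zg$, and says nothing about how $c_\ngo$ pairs against $\zg$-directions.

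More importantly, even your fallback direct computation is missing the key idea. Expanding $dc(e_1,e_2,Z,JZ)$ and using Remark~\ref{ABJ} together with $[A,B]Z=\ad(X)Z=0$ (since $X\in\ngo$ and $Z\in\zg(\ngo)$) yields the pointwise identity
\[
0=\la(BJA-AJB)Z,Z\ra-\la A^2Z,Z\ra-\la B^2Z,Z\ra-\tfrac12\bigl(|AZ|^2+|AJZ|^2+|BZ|^2+|BJZ|^2\bigr),
\]
which is \emph{not} a sum of squares: the first three terms have no sign. The paper's crucial step, which you do not mention, is to \emph{sum over an orthonormal basis of $\zg$} to turn this into the trace identity
\[
0=\tr(B_\zg J_\zg A_\zg-A_\zg J_\zg B_\zg)-\tr A_\zg^2-\tr B_\zg^2-|A_\zg|^2-|B_\zg|^2.
\]
Now two algebraic facts finish it: $A_\zg$ and $B_\zg$ commute (because $[A,B]|_\zg=\ad(X)|_\zg=0$), so the first trace vanishes by cyclicity; and $A_\zg,B_\zg$ are nilpotent, so $\tr A_\zg^2=\tr B_\zg^2=0$. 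This leaves $|A_\zg|^2+|B_\zg|^2=0$. The trace-over-$\zg$ maneuver combined with nilpotency and commutativity is the heart of the argument, and your proposal does not arrive at it.
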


\begin{proof} According to \eqref{eqn_dc}, for $Z\in\zg$ we have that
\begin{align*}
&dc(e_1,e_2,Z,JZ)=\\
&=-c(X,Z,JZ)+c(AZ,e_2,JZ)-c(AJZ,e_2,Z)-c(BZ,e_1,JZ)+c(BJZ,e_1,Z)\\
&=\la (JAJA+AJAJ+JBJB+BJBJ)Z,Z\ra- |AZ|^2- |AJZ|^2-|BZ|^2 -|BJZ|^2\\
&=\la (J[A,B]+[A,B]J+2(BJA-AJB-A^2-B^2))Z,Z\ra- |AZ|^2- |AJZ|^2-|BZ|^2 -|BJZ|^2.
\end{align*}
The last equation follows from Remark \ref{ABJ}. On the other hand, since $[A,B]=\ad(X)$, for all $Z\in\zg$ we have that $[A,B]Z$ and $[A,B]JZ$ vanish. Hence, the SKT condition yields
\begin{equation}\label{SKT-eq1}
0=\la (BJA-AJB)Z,Z\ra- \la A^2Z,Z\ra-\la B^2Z,Z\ra - \frac{1}{2}(|AZ|^2+ |AJZ|^2+|BZ|^2 +|BJZ|^2),
\end{equation}
for every $Z\in\zg$. If we sum over any orthonormal basis of $\zg$, then \eqref{SKT-eq1} gives us
\begin{equation}\label{SKT-eq2}
0=\tr(B_{\zg}J_{\zg} A_{\zg}-A_{\zg}J_{\zg} B_{\zg})-\tr{A_{\zg}^2}-\tr{B_{\zg}^2}-|A_{\zg}|^2-|B_{\zg}|^2. \end{equation}
By the Jacobi condition, $A_{\zg}$ and $B_{\zg}$ commute and since they are nilpotent, \eqref{SKT-eq2} implies that
$$
0=|A_{\zg}|^2+|B_{\zg}|^2,$$ 
then $A_{\zg}=B_{\zg}=0.$
\end{proof}

\begin{corollary}\label{A=0-abelian}
If $(\ggo_{A,B,X,\ngo},J,\ip)$ is SKT and $\ngo$ is abelian, then $\ggo_{A,B,X,\ngo}$ is at most $2$-step nilpotent.
\end{corollary}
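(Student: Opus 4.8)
The plan is to specialize the structure theory already developed to the case $\ngo$ abelian, where $\zg = \ngo$ and $\vg = 0$. In this situation $A = A_\zg$ and $B = B_\zg$ (the lower-triangular blocks with respect to $\ngo = \vg \oplus \zg$ degenerate since $\vg = 0$), so Lemma \ref{A3=0} gives immediately $A = \ad(e_1)_\ngo = 0$ and $B = \ad(e_2)_\ngo = 0$. Because $\ngo$ is abelian we also have $[\cdot,\cdot]_\ngo = 0$. Hence the only possibly nonzero brackets of $\ggo_{A,B,X,\ngo}$ are $[e_1,e_2] = X \in \ngo$ and $[e_1, Y] = \langle[e_1,Y],e_1\rangle e_1 + \langle[e_1,Y],e_2\rangle e_2$ for $Y\in\ngo$ (and similarly for $e_2$), since the $\ngo$-components of $\ad(e_1)$, $\ad(e_2)$ vanish.

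The next step is to rule out the remaining "bad" brackets, i.e. to show that $\ad(e_1)$ and $\ad(e_2)$ also have no component landing in $\vspan\{e_1,e_2\}$. Here I would invoke \eqref{eq_bracket}: in the Salamon basis $de^1 = de^2 = 0$, which says precisely that $e_1$ and $e_2$ never appear as a bracket, equivalently $\langle[\ggo,\ggo],e_1\rangle = \langle[\ggo,\ggo],e_2\rangle = 0$; in particular $[e_1,Y]$ and $[e_2,Y]$ have no $e_1$- or $e_2$-component. Combined with $A = B = 0$ this forces $[e_1,Y] = [e_2,Y] = 0$ for all $Y \in \ngo$. Therefore $\ggo_1 = [\ggo,\ggo] = \vspan\{X\} \subseteq \ngo$, and then $\ggo_2 = [\ggo,\ggo_1] \subseteq [\ggo, \ngo] = 0$ since $\ngo$ is abelian and central (it is centralized by $e_1$, $e_2$). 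Hence $\ggo$ is at most $2$-step nilpotent, which is the assertion.

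Alternatively, if one prefers not to lean on the particular choice of basis from \cite{Salamon01}, one can argue directly: $\ngo$ abelian and $A = B = 0$ already mean $\ngo \subseteq \zg(\ggo)$ except possibly through the $e_1,e_2$ directions, but the integrability condition of Remark \ref{ABJ} with $A = B = 0$ gives no new constraint, so the cleanest route really is the structure-constant normalization \eqref{eq_bracket}. I expect no genuine obstacle here: this corollary is essentially a bookkeeping consequence of Lemma \ref{A3=0} together with the fact that $de^1 = de^2 = 0$, and the whole argument is a couple of lines once those are in place. The only point requiring a moment's care is confirming that $X = [e_1,e_2]$ lies in $\ngo$ (so that the last bracket $[e_i, X]$ vanishes) — but this is built into the setup of $\ggo_{A,B,X,\ngo}$, where $X$ is by definition the $\ngo$-projection of $[e_1,e_2]$ and $[e_1,e_2]$ has no $e_1,e_2$-component by \eqref{eq_bracket}.
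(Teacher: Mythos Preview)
Your argument is correct and follows the same line as the paper: since $\ngo$ is abelian one has $\zg=\ngo$, so Lemma~\ref{A3=0} gives $A=B=0$, and the $2$-step conclusion follows. The paper's proof is literally this one sentence, leaving the last implication implicit; you have simply made it explicit.

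One minor remark: your second paragraph is more cautious than necessary. The setup already records that $\ngo=\vspan\{e_1,e_2\}^\perp$ is an \emph{ideal} of $\ggo$, so $[e_i,Y]\in\ngo$ for all $Y\in\ngo$ automatically, and hence $A=\ad(e_1)|_\ngo$ with no residual $e_1,e_2$-component to worry about. Your appeal to $de^1=de^2=0$ recovers the same fact, so nothing is wrong, just slightly redundant.
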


\begin{proof}
The proof follows immediately from Lemma \ref{A3=0} since $\ngo=\zg$ when it is abelian. 
\end{proof}

\begin{remark}
It follows from $A,B\in\Der(\ngo)$, $A_{\zg}=B_{\zg}=0$ and $[\ngo,\ngo]\subset\zg$, that 
\begin{equation}\label{skew} 
[AY,Z]=-[Y,AZ],\qquad [BY,Z]=-[Y,BZ], \qquad \forall Y,Z \in\ngo.
\end{equation}
\end{remark}

\begin{lemma}\label{lema-AB}
For any $Y\in\vg$, $[AY,BY]=0$. 
\end{lemma}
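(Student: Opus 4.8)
My plan is to sidestep the SKT equations here and argue purely with the Jacobi identity, using two structural facts available at this point. The first, and crucial, observation is that \emph{$A$ and $B$ annihilate $[\ngo,\ngo]$}: since $\ngo$ is $2$-step nilpotent we have $[\ngo,\ngo]\subseteq\zg$, and since $A,B\in\Der(\ngo)$ preserve $\zg$, Lemma \ref{A3=0} ($A_\zg=B_\zg=0$) says exactly that $A|_\zg=B|_\zg=0$; hence $A[\ngo,\ngo]=B[\ngo,\ngo]=0$. The second fact is that $de^1=de^2=0$, so $[\ggo,\ggo]\subseteq\ngo$; in particular $X\in\ngo$, and $A$, $B$ coincide with $\ad(e_1)|_\ngo$, $\ad(e_2)|_\ngo$ on $\ngo$.

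First I would expand $[AY,BY]=[[e_1,Y],[e_2,Y]]$ by the Jacobi identity in two symmetric ways. Expanding around $e_1$ and using that $[Y,BY]\in[\ngo,\ngo]$ is killed by $A$ gives $[AY,BY]=-[Y,ABY]$; expanding around $e_2$ and using that $[Y,AY]\in[\ngo,\ngo]$ is killed by $B$ gives $[AY,BY]=[Y,BAY]$. Consequently $[Y,ABY]+[Y,BAY]=0$.

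Next I would identify the commutator $[A,B]$ on $\ngo$: the Jacobi identity for $e_1,e_2,Y$ gives $(AB-BA)Y=[X,Y]$, and since $X,Y\in\ngo$ this element lies in $[\ngo,\ngo]\subseteq\zg$, whence $[Y,(AB-BA)Y]=[Y,[X,Y]]=0$, i.e.\ $[Y,ABY]-[Y,BAY]=0$. Combining with $[Y,ABY]+[Y,BAY]=0$ forces $[Y,ABY]=0$, and therefore $[AY,BY]=-[Y,ABY]=0$.

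I do not expect a serious obstacle; the argument is short once one spots that $A$ and $B$ kill $[\ngo,\ngo]$. The only care needed is in the bookkeeping — checking that the brackets $[Y,AY]$, $[Y,BY]$, $[X,Y]$ and $[Y,ABY]$ all land in $[\ngo,\ngo]$, which is where the $2$-step nilpotency of $\ngo$ enters, and keeping the signs straight in the three Jacobi expansions. (In fact the same computation gives $[AY,BY]=0$ for every $Y\in\ngo$, not only $Y\in\vg$.)
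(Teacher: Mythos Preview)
Your proof is correct and takes essentially the same approach as the paper's: both rest on $A|_\zg=B|_\zg=0$ (Lemma~\ref{A3=0}, giving the ``skew'' relations) together with $(AB-BA)Y=[X,Y]\in\zg$ from Jacobi, and both conclude by showing $[AY,BY]=-[AY,BY]$. The only cosmetic difference is that the paper routes the commutation through the $\vg$-blocks (using $[A_\vg,B_\vg]=0$) whereas you derive the pair $[Y,ABY]\pm[Y,BAY]=0$ directly; the content is the same, and your remark that the argument works for all $Y\in\ngo$ is correct.
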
 

\begin{proof}
Given $Y\in\vg$, then $[AY,BY]=-[BAY,Y]$ from \eqref{skew}. 
Since $\ngo=\vg\oplus\zg$ and $A_{\zg}=B_{\zg}=0$, it follows that
\[
[BAY,Y]=[B_\vg A_\vg Y,Y].
\]
From the Jacobi condition, we know that $[A,B]_\vg=0$ and therefore $[A_\vg,B_\vg]=0$. Hence, 
\[
[AY,BY] = -[BAY,Y] = -[B_\vg A_\vg Y,Y] = -[A_\vg B_\vg Y,Y] = -[A B Y,Y] = [BY,AY] = -[AY,BY],
\]
and the assertion follows.\end{proof}

\begin{lemma}
If $(\ggo_{A,B,X,\ngo},J,\ip)$ is SKT, then $A_\vg=B_\vg=0$ and $X\in\zg$.
\end{lemma}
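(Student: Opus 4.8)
The plan is to derive a system of SKT equations by plugging carefully chosen quadruples into \eqref{eqn_dc}, then extract enough trace identities to force $A_\vg=B_\vg=0$ and $X\in\zg$. We already know from Lemma \ref{A3=0} that $A_\zg=B_\zg=0$, so $A,B$ restrict to derivations of the $2$-step algebra $\ngo=\vg\oplus\zg$ with $A_\zg=B_\zg=0$; this gives the skew-symmetry relations \eqref{skew}, and by the Jacobi identity $[A_\vg,B_\vg]=0$ together with $[A,B]=\ad X$. The first step is to compute $dc(e_1,e_2,Y,JY)$ for $Y\in\vg$, which will be the main workhorse: by the structure equation \eqref{eq_bracket} the only nonzero brackets involving $e_1,e_2$ are $[e_1,Y]=AY+(\text{$e_1$-part})$, $[e_2,Y]=BY+(\text{$e_2$-part})$, $[e_1,e_2]=X$, and these, combined with $[\ngo,\ngo]\subseteq\zg$ being $J$-invariant (Proposition \ref{centro-Jinv}), collapse most of the eighteen terms. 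I expect this to produce, after using Remark \ref{ABJ} to trade $JA$ for $BJ+J(\cdots)$ and using \eqref{skew}, an identity of the shape
\[
0 = \la(\text{commutator terms in }A,B,J)Y,Y\ra - \|[Y,\,\cdot\,]\text{-type norms}\|^2 - (\text{$X$-contribution}),
\]
where the ``norm'' terms are manifestly $\le 0$; summing over an orthonormal basis of $\vg$ and invoking $[A_\vg,B_\vg]=0$ plus nilpotency of $A_\vg,B_\vg$ (so all relevant traces vanish, exactly as in the passage from \eqref{SKT-eq2}) should kill the commutator trace and force each squared-norm term to vanish.

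The second step is to interpret the vanishing norms. From $dc(e_1,e_2,Y,JY)=0$ for all $Y\in\vg$ I expect to get $A_\vg Y=\pm$ something in $\zg$ and ultimately $\|A_\vg Y\|=\|B_\vg Y\|=0$ after summing, i.e.\ $A_\vg=B_\vg=0$; Lemma \ref{lema-AB} ($[AY,BY]=0$) is presumably the ingredient that lets one discard a cross term $\la[AY,BY],\cdot\ra$ that would otherwise obstruct the sign argument. Alternatively, if the single quadruple $(e_1,e_2,Y,JY)$ is not enough, the mixed quadruples $(e_1,e_2,Y,JY')$ or $(e_1,JY,e_2,\cdot)$ with $Y,Y'\in\vg$ should supply the off-diagonal information; the key point is that all contributions from $\ad(e_i)$ land in $\vg\oplus\zg$ with the $\zg$-parts controlled and the $\vg$-parts being $A_\vg,B_\vg$, which are nilpotent and commuting, so traces of their products and of their squares all vanish and only the positive-definite norm terms survive.

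Once $A_\vg=B_\vg=0$ is established, $A$ and $B$ have image in $\zg$; combined with $A_\zg=B_\zg=0$ this means $A^2=B^2=0$ and $\ad(e_1)^2=\ad(e_2)^2$ map into $\zg$ then to $0$ on $\ngo$. For $X\in\zg$: write $X=X_\vg+X_\zg$; the integrability condition (Remark \ref{ABJ}) and the identity $[A,B]=\ad(X)_\ngo$ now read $\ad(X_\vg)_\ngo=[A,B]=AB-BA$, and since $A,B$ both have image in $\zg$ on which $A,B$ vanish, $AB=BA=0$, so $\ad(X_\vg)_\ngo=0$, i.e.\ $X_\vg\in\zg(\ngo)\cap\vg=0$, giving $X\in\zg$. (If a further SKT equation is needed to handle $X$ directly, the quadruple $(e_1,e_2,e_1,Y)$ type computations, or $dc$ evaluated on $e_1,e_2,X,JX$, should do it, using that $\ad X$ is now nilpotent of a controlled form.)

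The main obstacle I anticipate is the bookkeeping in the first step: \eqref{eqn_dc} has eighteen terms, and with $e_1,e_2\notin\ngo$ one must track the $\ngo$-components of $[e_i,Y]$ (namely $AY$, $BY$) against the full brackets, being careful that terms like $[J[e_1,e_2],JY]=[JX,JY]$ do \emph{not} automatically vanish since $X$ need not yet be central. Keeping the $X$-dependent terms explicit and showing they either cancel or appear with a definite sign — so that the final inequality still forces $A_\vg=B_\vg=0$ and simultaneously pins down $X$ — is the delicate part; everything after that is linear algebra with commuting nilpotent operators.
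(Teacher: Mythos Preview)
Your plan is to mimic the proof of Lemma~\ref{A3=0} by evaluating $dc(e_1,e_2,Y,JY)$ for $Y\in\vg$ and summing over an orthonormal basis. This is \emph{not} what the paper does, and the approach has a genuine obstruction. Carrying out the computation (using $A_\zg=B_\zg=0$ and that $[\ngo,\ngo]\subseteq\zg$ is $J$-invariant) one finds, for $Y\in\vg$,
\[
0=2\la X,[Y,JY]\ra+\big\la(JAJA+AJAJ+JBJB+BJBJ)Y,\,Y\big\ra-\|AY\|^2-\|AJY\|^2-\|BY\|^2-\|BJY\|^2.
\]
After tracing over $\vg$, the operator term vanishes exactly as in Lemma~\ref{A3=0} (using $[A_\vg,B_\vg]=0$ and nilpotency), but the new contribution $2\la X,\sum_i[Y_i,JY_i]\ra$ does \emph{not} cancel and has no definite sign. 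You are left with $\|A|_\vg\|^2+\|B|_\vg\|^2=\la X,H\ra$ where $H=\sum_i[Y_i,JY_i]\in\zg$, and this can be strictly positive even in genuine $2$-step SKT examples (the off-diagonal blocks $\vg\to\zg$ of $A,B$ are typically nonzero, so $\|A|_\vg\|\neq 0$). So the inequality you hope for never materializes, and no amount of mixed quadruples $(e_1,e_2,Y,JY')$ will remove the $X$-term, since it already survives the trace. Your ``delicate part'' is in fact a hard obstruction.

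The paper's argument is entirely different and avoids evaluating $dc$ on $\vg$-vectors altogether. It is a pointwise kernel-propagation argument: since $A_\vg,B_\vg$ are commuting and nilpotent, they share a nonzero null vector $Y\in\vg$; one then shows $A_\vg JY=B_\vg JY=0$ as well, by proving $[AJY,JAJY]=0$ and invoking Lemma~\ref{centro} (which characterizes $\zg$ via $[V,JV]=0$). The identity $[AJY,JAJY]=0$ comes from expanding $JAJY$ via the integrability relation of Remark~\ref{ABJ} and applying Lemma~\ref{lema-AB} together with $AY,BY\in\zg$. This makes $\bg=\Ker A_\vg\cap\Ker B_\vg$ $J$-invariant; repeating the argument on $\bg^\perp$ forces $\bg=\vg$. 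Only Lemma~\ref{A3=0} (for $A_\zg=B_\zg=0$) uses a trace argument; here the SKT input is encoded solely through Lemma~\ref{centro} on $\ngo$, not through a new $dc$-computation on $\ggo$. Your deduction of $X\in\zg$ from $A_\vg=B_\vg=0$ via $[A,B]=\ad(X)$ is correct and is exactly how the paper concludes.
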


\begin{proof} Since $A_\vg$ and $B_\vg$ are nilpotent and commute, we can take  $Y\in\vg$ such that $A_\vg Y = B_\vg Y = 0$,  or equivalently,  $A Y\in\zg$ and  $B Y\in\zg$.  We now proceed by showing that $JY$ satisfies the same conditions. Recall that from Lemma \ref{centro}, it is sufficient to prove that $0=[AJY,JAJY]=[BJY,JBJY]$. By Remark \ref{ABJ},
\begin{equation}\label{integ}
    [AJY,JAJY]=[AJY,(BJ-JB-A)Y]=[AJY,BJY]-[AJY,JBY]-[AJY,AY],
\end{equation}
which vanish by Lemma \ref{lema-AB} and Proposition \ref{centro-Jinv} applied to $(\ngo,J_\ngo)$. 
Hence, $A_\vg JY=0$, and it analogously follows that $B_\vg JY=0$.

Furthermore, setting $\bg:=\Ker(A_\vg)\cap\Ker(B_\vg)\neq 0$, we showed that $\bg$ is $J$-invariant.
If we prove that $\bg=\vg$, the assertion follows. 

On the contrary, suppose that $\ag:=(\Ker(A_\vg)\cap\Ker(B_\vg))^\perp\neq\{0\}$ and  $A_\ag$ and $B_\ag$ are defined according to Notation \ref{projection}. It can be easily seen that $A_\ag$ and $B_\ag$ are nilpotent and commute, therefore, there exists $0\neq W\in\ag$ such that $A_\ag W=B_\ag W=0$. In other words, $AW,BW\in \bg \oplus\zg$. 

In the same way as we proceed after equation  \eqref{integ}, we can show that 
\[[AJW,JAJW]=[JW,AJBW]+[JW,A^2W].\]
From the fact that $A(\bg\oplus\zg)\subseteq\zg$ and $\bg\oplus\zg$ is $J$-invariant, it follows that $AJW\in\zg$. In the same manner we can prove that $BJW\in\zg$. Therefore, $JW\in\bg$ which leads to a contradiction since $\bg$ is $J$-invariant and $W\in\ag$. We conclude that $A_\vg=B_\vg=0$, so
\[
A=\left[\begin{matrix}0& 0\\ \ast & 0 \end{matrix}\right], \qquad B=\left[\begin{matrix}0& 0\\ \ast & 0 \end{matrix}\right].
\]
The fact that $X$ lies in $\zg$ follows immediately from the Jacobi condition, i.e. $[A,B]=\ad(X)$. 
\end{proof}

An immediate consequence of the Lie algebra structure of $\ggo_{A,B,X,\ngo}$ given in \eqref{structure} and the above lemma is the following result. 

\begin{corollary}\label{n-2p}
If $(\ggo_{A,B,X,\ngo},J,\ip)$ is SKT, then $\ggo_{A,B,X,\ngo}$ is at most $2$-step nilpotent.
\end{corollary}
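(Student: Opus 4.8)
The plan is to show that once we know $A_\vg=B_\vg=0$, $A_\zg=B_\zg=0$ (hence $A=B=0$ as maps on $\ngo$, aside from the blocks that vanish) and $X\in\zg$, the descending central series of $\ggo_{A,B,X,\ngo}$ terminates at step $2$. This is entirely a structural consequence of the Lie bracket description in \eqref{structure}, so the "proof" is really just a bookkeeping argument: I would compute $\ggo_1=[\ggo,\ggo]$ and $\ggo_2=[\ggo,\ggo_1]$ explicitly in terms of the data $A$, $B$, $X$, $[\cdot,\cdot]_\ngo$, and check that $\ggo_2=0$.

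Concretely, here is the order of steps. First, identify $\ggo_1=[\ggo_{A,B,X,\ngo},\ggo_{A,B,X,\ngo}]$. The brackets are: $[e_1,e_2]=X$, $[e_i,Y]$ for $Y\in\ngo$ given by $\ad(e_i)Y$, and $[Y,Y']_\ngo$ for $Y,Y'\in\ngo$. By the preceding lemma $A=\ad(e_1)_\ngo$ and $B=\ad(e_2)_\ngo$ have the block form with all blocks zero except possibly the lower-left $\zg$-to... wait, rather: $A=\left[\begin{smallmatrix} 0 & 0 \\ \ast & 0\end{smallmatrix}\right]$ with respect to $\ngo=\vg\oplus\zg$, meaning $A(\vg)\subseteq\zg$ and $A(\zg)=0$; similarly for $B$. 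Also $X\in\zg$, and $[\vg,\vg]_\ngo\subseteq\zg$, $[\zg,\ngo]_\ngo=0$ since $\zg=\zg(\ngo)$. Therefore $\ggo_1\subseteq\zg$: indeed $X\in\zg$, $AY,BY\in\zg$ for all $Y\in\ngo$, and $[\ngo,\ngo]_\ngo\subseteq\zg$. Second, compute $\ggo_2=[\ggo,\ggo_1]\subseteq[\ggo,\zg]$. But $[e_1,Z]=AZ=0$ and $[e_2,Z]=BZ=0$ for $Z\in\zg$ (since $A_\zg=B_\zg=0$ and $A(\zg)=0$), and $[\ngo,\zg]_\ngo=0$. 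Hence $\ggo_2=0$, which is exactly the assertion that $\ggo_{A,B,X,\ngo}$ is at most $2$-step nilpotent.

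There is essentially no obstacle here; the only point requiring a moment's care is making sure that "$\ad(e_1)_\ngo$ equals $\ad(e_1)|_\ngo$" is not being implicitly assumed — a priori $\ad(e_1)Y$ for $Y\in\ngo$ could have a component along $\vspan\{e_1,e_2\}$. But \eqref{eq_bracket} (the Salamon basis) guarantees $de^1=de^2=0$, i.e. $e_1,e_2$ never appear as a bracket of anything, so $\ad(e_1)|_\ngo$ and $\ad(e_2)|_\ngo$ actually take values in $\ngo$ and coincide with $A$ and $B$; similarly $[\ggo,\ggo]\subseteq\ngo$. With that observed, the computation above is immediate. I would therefore present the corollary's proof in one short paragraph: using \eqref{structure}, the previous lemma, and $\zg=\zg(\ngo)$ together with $[\ngo,\ngo]_\ngo\subseteq\zg$, one gets $[\ggo,\ggo]\subseteq\zg$ and then $[\ggo,[\ggo,\ggo]]\subseteq[\ggo,\zg]=0$.
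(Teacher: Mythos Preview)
Your proposal is correct and matches the paper's approach exactly: the paper states the corollary as an ``immediate consequence of the Lie algebra structure of $\ggo_{A,B,X,\ngo}$ given in \eqref{structure} and the above lemma'' without writing out the details, and your computation $[\ggo,\ggo]\subseteq\zg$, hence $[\ggo,[\ggo,\ggo]]\subseteq[\ggo,\zg]=0$, is precisely what that sentence is pointing to. Your remark that $de^1=de^2=0$ forces $\ad(e_i)|_\ngo$ to take values in $\ngo$ (so that $A,B$ really are the full adjoint actions, not just their projections) is a worthwhile clarification.
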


\subsection{General case} In the above sections we proved Theorem \ref{main theorem} for two particular cases. We are now in position to prove Theorem \ref{main theorem} in the general case, which is the main result of this article.

\begin{theorem}
If $(\ggo,J,\ip)$ is SKT with $\ggo$ nilpotent, then $\ggo$ is at most $2$-step nilpotent.
\end{theorem}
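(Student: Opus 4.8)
The plan is to run induction on $n$, where $\dim\ggo=2n$. The base case $n=1$ is trivial (abelian). For the inductive step, write $\ggo=\vspan\{e_1,e_2\}^\perp\oplus\ngo$ with $\ngo$ a $J$-invariant ideal of codimension $2$ as in \eqref{eq_bracket}, so that $\ggo=\ggo_{A,B,X,\ngo}$ in the notation of \eqref{structure}. By Proposition \ref{ideal}, $(\ngo,J_\ngo,\ip|_\ngo)$ is an SKT Lie algebra of dimension $2(n-1)$, and it is nilpotent since $\ngo$ is an ideal of the nilpotent algebra $\ggo$. By the induction hypothesis, $\ngo$ is at most $2$-step nilpotent: either $\ngo$ is abelian, or $\ngo$ is genuinely $2$-step nilpotent.

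In the first case, Corollary \ref{A=0-abelian} immediately gives that $\ggo_{A,B,X,\ngo}$ is at most $2$-step nilpotent, and we are done. In the second case, Corollary \ref{n-2p} (which is precisely the content of Section \ref{sec-ABX}, established under the standing hypothesis that $\ngo$ is $2$-step nilpotent) applies verbatim and again yields that $\ggo_{A,B,X,\ngo}$ is at most $2$-step nilpotent. So in either case the inductive step is complete, and the theorem follows. Finally, Theorem \ref{main theorem} is the combination of this statement with the observation that a $1$-step nilpotent (i.e.\ abelian) Lie algebra corresponds to a torus.

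The only genuinely new content here, beyond assembling Sections \ref{SKT nil} and \ref{sec-ABX}, is the verification that the decomposition \eqref{eq_bracket}--\eqref{structure} indeed realizes $\ggo$ as some $\ggo_{A,B,X,\ngo}$ with $\ngo$ nilpotent SKT, so that the induction hypothesis is applicable to $\ngo$; this is already recorded in the discussion preceding Remark \ref{ABJ}. The main (and only) subtlety to be careful about is that Corollary \ref{n-2p} was proved under the assumption that $\ngo$ is $2$-step nilpotent, whereas the induction hypothesis only tells us $\ngo$ is \emph{at most} $2$-step; one must therefore explicitly split into the two sub-cases "$\ngo$ abelian'' and "$\ngo$ exactly $2$-step'' and invoke Corollary \ref{A=0-abelian} in the former and Corollary \ref{n-2p} in the latter, rather than trying to apply a single statement uniformly. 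No further estimates or structural arguments are needed.
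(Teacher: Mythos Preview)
Your proposal is correct and follows essentially the same approach as the paper's own proof: induction on $n=\tfrac{1}{2}\dim\ggo$, passage to the codimension-$2$ ideal $\ngo$ via Proposition~\ref{ideal}, application of the induction hypothesis to $\ngo$, and then invocation of Corollary~\ref{A=0-abelian} (abelian case) or Corollary~\ref{n-2p} ($2$-step case). One small typo to fix: you wrote $\ggo=\vspan\{e_1,e_2\}^\perp\oplus\ngo$, but it should be $\ggo=\vspan\{e_1,e_2\}\oplus\ngo$ with $\ngo=\vspan\{e_1,e_2\}^\perp$.
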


\begin{proof}
The proof is by induction on $n$, where $\dim\ggo=2n$. It is clear that the assertion is true for $n=1$. Suppose that it holds for every SKT nilpotent Lie algebra of dimension $2k$,  with $k<n$.

By the discussion at the beginning of Section \ref{sec-conj} there exists $A,B\in \glg(2(n-1),\RR)$, $X\in\RR^{2(n-1)}$ and $\ngo$ ideal of $\ggo$ of dimension $2(n-1)$ such that 
\[
\ggo=\ggo_{A,B,X,\ngo}.
\]
By Proposition \ref{ideal}, $(\ngo,J_\ngo,\ip|_\ngo)$ is SKT and of course nilpotent. Then, by hypothesis, $\ngo$ is at most $2$-step nilpotent. 
We are now under the hypothesis of Corollary \ref{A=0-abelian} or Corollary \ref{n-2p}, and this implies that $\ggo$ is at most $2$-step nilpotent.\end{proof}

\section{Construction of examples}\label{contructionofexamples} 

In this section we present a method to construct examples of SKT Lie algebras of arbitrary dimensions. The idea is to start with two SKT Lie algebras of dimension $n_1$ and $n_2$ that satisfy certain condition, and to construct a new SKT Lie algebra of dimension $n_1+n_2+2$. With this method and some already known examples, we can provide an example of an SKT Lie algebra of any even-dimension.

\subsection{A new construction}\label{construction}

For $i=1,2$, let $(\ngo_i,J_i,\ip_i)$ be an {\it irreducible} $2$-step nilpotent SKT Lie algebra. It is to say, $\ngo_i$ can not be decomposed as an orthogonal sum of $J$-invariant ideals, or equivalently, it is not a product of SKT Lie algebras of lower dimensions (see Remark \ref{ideal-coro}). Suppose in addition that for each $i=1,2$, 
$$\ngo_i=\vg_i\oplus \zg_i\quad \mbox{ and} \quad \dim{\zg_i}>\dim{[\ngo_i,\ngo_i]}.$$
Set $n_i:=\dim{\ngo_i}$, for $i=1,2$, and let $\{X_1,\dots,X_{n_1}\}$ and  $\{Y_1,\dots,Y_{n_2}\}$ be orthonormal basis of $(\ngo_1,\ip_1)$ and $(\ngo_2,\ip_2)$, respectively. There is no loss of generality in assuming that 
\[X_{n_1}\in\zg_1\cap[\ngo_1,\ngo_1]^\perp, \quad Y_{n_2}\in\zg_2\cap[\ngo_2,\ngo_2]^\perp.\]

Let $\ggo$ be the Lie algebra with underlying vector space $\ngo_1\oplus\ngo_2\oplus\RR^2$. Take $Z,W\in\ggo$ such that  $\{X_1,\dots,X_{n_1},Y_1,\dots,Y_{n_2},Z,W\}$ is a basis of $\ggo$, and consider $\ip$ which makes it an orthonormal basis. It is obvious that $\ip|_{\ngo_1\times\ngo_1}=\ip_1$ and  $\ip|_{\ngo_2\times\ngo_2}=\ip_2$. 

Let the Lie bracket on $\ggo$ be determined by
\begin{equation}\label{corchete}
\lb|_{\ngo_1\times\ngo_1}=\lb_{\ngo_1},\quad \lb|_{\ngo_2\times\ngo_2}=\lb_{\ngo_2},\quad [Z,W]=X_{n_1}+Y_{n_2},    
\end{equation}
and the complex structure defined as
$$J=\left[\begin{smallmatrix}\\J_1&&&\\&J_2&&\\&&0&-1\\&&1&0\end{smallmatrix}\right].$$
In particular $JZ=W$ and $[Z,JZ]\in([\ngo_1,\ngo_1]\oplus[\ngo_2,\ngo_2])^\perp$. 

In order to prove that $(\ggo,J,\ip)$ is SKT, we only have to check that
$$0=dc(Z,JZ,W_1,W_2)=dc(Z,W_1,W_2,W_3)=dc(JZ,W_1,W_2,W_3),\quad W_1,W_2,W_3\in\ngo_1\cup\ngo_2.$$
Indeed, by \eqref{eqn_dc}
\begin{align*}
dc(Z,JZ,W_1,W_2)=&\quad\la[J[Z,JZ],JW_1],W_2\ra+\la[JW_1,JW_2],[Z,JZ]\ra+\la[JW_2,J[Z,JZ]],W_1\ra \\
     &+\la [J[W_1,W_2],JZ],JZ\ra+\la [JZ,JJZ],[W_1,W_2]\ra+\la [JJZ,J[W_1,W_2]],Z\ra,
\end{align*}
which vanishes because $J$ preserve $\zg_1\oplus\zg_2$, $\ggo$ is 2-step nilpotent and $[Z,JZ]$  is orthogonal to $[\ngo_1,\ngo_1]\oplus[\ngo_2,\ngo_2]$. On the other hand, it is immediate from \eqref{eqn_dc} and the Lie algebra structure of $\ggo$, that $dc(Z,W_1,W_2,W_3)$ and $dc(JZ,W_1,W_2,W_3)$ vanish.

It only remains to see that $(\ggo,J,\ip)$ is irreducible. Suppose that there exists an orthogonal $J$-invariant decomposition of ideals $$\ggo=\ag\oplus\bg,$$ 
where $\ag$ is irreducible. If $\ngo_1\cap\ag\neq\{0\}$, then it is a $J$-invariant ideal contained in $\ag$ irreducible. Therefore, $\ag=\ngo_1$ and $\bg=\ngo_2\oplus\vspan\{Z,JZ\}$. This contradicts the fact that $\bg$ is an ideal, since $[Z,JZ]=X_{n_1}+Y_{n_2}\in\ngo_1\oplus\ngo_2$. If $\ngo_2\cap\ag\neq\{0\}$, we can proceed analogously and to get a contradiction. Finally, if $\ngo_i\cap\ag=\{0\}$ for $i=1,2$, it follows that $(\ngo_1\oplus\ngo_2) \cap \ag= \{0\}$ by using that it is an ideal of $\ag$ and $\ag$ is irreducible. Then $\ag$ has to be zero. Indeed, if $A\in\ag$, $A=N_1+N_2+\alpha Z + \beta JZ,$ with $\alpha \neq 0$ or $\beta \neq 0$. Then, $[A,Z] \in \ag$ and $[A,JZ] \in \ag$, which means that $X_{n_1} + Y_{n_2} \in \ag$, and we obtain a contradiction.

%On the contrary, if $\ngo_i\cap\ag=\{0\}$ for $i=1,2$, then $\ag=\vspan\{Z,JZ\}$ because $\ag$ is $J$-invariant, which implies that $\ag$ is not an ideal and leads to a contradiction. Therefore, $\ggo$ must be irreducible.

\begin{remark}\label{repeat}
Observe that a  quick computation shows that the SKT Lie algebra $(\ggo, J, \ip)$ obtained by the above construction satisfies $\dim \zg(\ggo)>\dim [\ggo,\ggo]$. To the obtained example, we can apply the construction again in order to get higher dimensional examples.
\end{remark}

\begin{remark}
 Setting $[Z,W]=rX_{n_1}+sY_{n_2}$, for $s,t\in\RR-\{0\}$, instead of $[Z,W]=X_{n_1}+Y_{n_2}$ in \eqref{corchete}, we obtain a family of examples of SKT Lie algebras. An interesting question is whether they are pairwise non-equivalent.
\end{remark}

\begin{remark}\label{suma-ab}
In the previous construction, if both $J_1$ and $J_2$ are abelian, then $J$ results abelian, and if one of them is not, then $J$ is not abelian. Recall that a complex structure $J$ on $\ggo$ is called \emph{abelian} if $[JX,JY]=[X,Y]$ for all $X,Y \in \ggo$.
\end{remark}

%\subsection{Known examples}
\subsection{Known examples}\label{known examples} In this section, we present some known examples of SKT Lie algebras to set up some notation.

\begin{example}\label{ej_dim4}\cite{MadsenSwann} Consider the $4$-dimensional Lie algebra $\ngo_1$ with basis $\{e_1,\dots,e_4\}$ and Lie bracket determined by
\[de^3=-e^{12}.\] 
Let $\ip_1$ be the inner product such that the basis is orthonormal, and the abelian complex structure $J_1$ is defined by,
\[J_1e_1=e_2,\quad J_1e_3=e_4.\]
The Hermitian manifold $(\ngo_1,J_1,\ip_1)$ has the following torsion $3$-form of the Bismut connection
\[c=-e^{123},\] 
which turns out to be closed and therefore $(\ngo_1,J_1,\ip_1)$ is an SKT Lie algebra. Note that if $\zg_1$ is the center of $\ngo_1$, then $\zg_1\cap[\ngo_1,\ngo_1]^\perp=\vspan\{e_4\}$.
\end{example}

\begin{example}\cite{FinoPartonSalamon,Ugarte07}\label{ej_dim6} Let $\ngo_2$ be the $6$-dimensional Lie algebra with basis $\{f_1,\dots,f_6\}$ and Lie bracket determined by
\[df^5=-f^{12}+f^{14}-f^{23}-f^{34}.\] 
Let $\ip_2$ be the inner product such that the basis is orthonormal, and the abelian complex structure $J_2$ is defined by,
\[J_2f_1=f_2,\quad J_2f_3=f_4,\quad J_2f_5=f_6.\]
Then, the torsion $3$-form of the Bismut connection of $(\ngo_2,J_2,\ip_2)$ is
\[c=-f^{125}+f^{145}-f^{235}-f^{345},\] 
and it is closed, so $(\ngo_2,J_2,\ip_2)$ is SKT.
Observe that if $\zg_2$ is the center of $\ngo_2$, then $\zg_2\cap[\ngo_2,\ngo_2]^\perp=\vspan\{f_6\}$.
\end{example}

\begin{example}\cite{EFV12}\label{ej_dim8} Consider the $8$-dimensional Lie algebra $\ngo_3$ with basis $\{v_1,\dots,v_8\}$ and Lie bracket determined by
\[dv^5=-2v^{12}+v^{14}-v^{34},\quad dv^6=-v^{13},\quad dv^7=-v^{12}+v^{34}.\] 
Let $\ip_3$ be the inner product such that the basis is orthonormal, and the non-abelian complex structure $J_3$ defined by,
\[J_3v_1=v_2,\quad J_3v_3=v_4,\quad J_3v_5=v_6,\quad J_3v_7=v_8.\]
The Hermitian manifold $(\ngo_3,J_3,\ip_3)$ has the following torsion $3$-form of the Bismut connection
\[c=-2v^{125}-v^{127}-v^{235}-v^{246}-v^{345}+v^{347},\] 
which is closed, and therefore $(\ngo_3,J_3,\ip_3)$ is SKT.
Note that if $\zg_3$ is the center of $\ngo_3$, then $\zg_3\cap[\ngo_3,\ngo_3]^\perp=\vspan\{v_8\}$.
\end{example}

\subsection{Applications}\label{aplications}
The aim of this section is to apply the construction given in Section \ref{construction}. We provide two new examples of SKT Lie algebras by using Examples \ref{ej_dim4}, \ref{ej_dim6} and \ref{ej_dim8}.

\begin{example}
Let $(\ngo_1,J_1,\ip_1)$ and $(\ngo_2,J_2,\ip_2)$ be the irreducible SKT Lie algebras defined in Examples \ref{ej_dim4} and \ref{ej_dim6}, respectively. According to the method presented in Section \ref{construction}, we can construct a $(4+6+2)$-dimensional SKT Lie algebra $\ggo$ with orthonormal basis $\{e_1,\dots,e_4,f_1,\dots,f_6,w_1,w_2\}$, Lie bracket determined by,
\[de^3=-e^{12},\quad df^5=-f^{12}+f^{14}-f^{23}-f^{34},\quad de^4=-w^{12},\quad df^6=-w^{12}\] and complex structure
\[J|_{\ngo_1}=J_1,\quad J|_{\ngo_2}=J_2,\quad Jw_1=w_2. \]
Indeed, the resulting torsion $3$-form of the Bismut connection is
\[c=-e^{123}-2v^{125}-v^{127}-v^{235}-v^{246}-v^{345}+v^{347}-e^4\wedge w^{12}-f^6\wedge w^{12},\]    
which is closed and therefore $(\ggo,J,\ip)$ is SKT. An easy computation shows that $J$ is abelian, which is consistent with Remark \ref{suma-ab}.
\end{example}

%\begin{remark}
%We can obtain a $2$-parameter family of SKT Lie algebras setting 
%\[[w_1,w_2]= re_4+sf_6,\;r,s\neq 0\quad\mbox{ instead of }\quad [w_1,w_2]=e_4+e_6.\]
%\end{remark}

\begin{example}
Let $(\ngo_1,J_1,\ip_1)$ and $(\ngo_3,J_3,\ip_3)$ be the irreducible SKT Lie algebras defined in Examples \ref{ej_dim4} and \ref{ej_dim8}, respectively. As we did in the previous example, we construct a $(4+8+2)$-dimensional SKT Lie algebra $\ggo$ with orthonormal basis $\{e_1,\dots,e_4,v_1,\dots,v_8, w_1,w_2\}$, Lie bracket determined by,
\[\begin{array}{c}
    de^3=-e^{12},\quad dv^5=-2v^{12}+v^{14}-v^{34},\quad dv^6=-v^{13},\quad \\
    dv^7=-v^{12}+v^{34},\quad
    de^4=-w^{12},\quad dv^8=-w^{12},
\end{array}\] and complex structure:
\[ J|_{\ngo_1}=J_1,\quad J|_{\ngo_3}=J_3,\quad Jw_1=w_2.\]
Indeed, the resulting torsion $3$-form of the Bismut connection is
\[c=-e^{123}-2v^{125}-v^{127}-v^{235}-v^{246}-v^{345}+v^{347}-e^4\wedge w^{12}-v^8\wedge w^{12},\]
which is closed and therefore $(\ggo,J,\ip)$ is SKT. Note that $J$ is not abelian since $J_3$ is not abelian.

\end{example}

\begin{remark}
It is worth pointing out that with Examples \ref{ej_dim4}, \ref{ej_dim6} and \ref{ej_dim8} it can be reached at least one example of an irreducible SKT Lie algebra on any even-dimension by applying the construction repeatedly (see Remark \ref{repeat}). For instance, in order to obtain an example of dimension $4+6m,$ with $m\in\NN$, the only needed SKT Lie algebra is $(\ngo_1,J_1,\ip_1)$ of Example \ref{ej_dim4}. In fact, applying the construction to $(\ngo_1,J_1,\ip_1)$ and $(\ngo_1,J_1,\ip_1)$, an SKT Lie algebra of dimension $10$ is obtained. Using the new SKT Lie algebra and again $(\ngo_1,J_1,\ip_1)$, an SKT Lie algebra of dimension $16$ is constructed, and go on. Analogously, examples of dimensions $6+6m$ and $8+6m,$ with $m\in\NN$, can be obtained from $(\ngo_1,J_1,\ip_1)$ and $(\ngo_2,J_2,\ip_2)$ given in Examples \ref{ej_dim4} and \ref{ej_dim6}, and $(\ngo_1,J_1,\ip_1)$ and $(\ngo_3,J_3,\ip_3)$ given in Examples \ref{ej_dim4} and \ref{ej_dim8}, respectively.
\end{remark}

\subsection{More examples of SKT Lie algebras with non-abelian complex structures}

\begin{example}\cite{FinoPartonSalamon}\cite{Ugarte07}\label{ej_dim6_nonabelian}
Let $\ngo$ be the $6$-dimensional Lie algebra with basis $\{e_1,\dots,e_{6}\}$ and Lie bracket determined by
\[de^5=-e^{12}-e^{14}-e^{34},\quad de^6=e^{13}.\]
Let $\ip$ be the inner product such that the basis is orthonormal, and the non-abelian complex structure $J$ defined by,
\[Je_1=e_2,\quad Je_3=e_4,\quad Je_5=e_6.\]
The torsion $3$-form of the Bismut connection of $(\ngo,J,\ip)$ is
\[c=-e^{125}+e^{235}+e^{246}-e^{345},\] 
and it is closed. Therefore $(\ngo,J,\ip)$ is SKT.
Note that if $\zg$ is the center of $\ngo$, then $\zg=[\ngo,\ngo]=\vspan\{e_5,e_6\}$.
\end{example}

\begin{example}\label{ej_dim10} Consider the $10$-dimensional Lie algebra $\ngo$ with basis $\{e_1,\dots,e_{10}\}$ and Lie bracket determined by
{\small
\[de^7=-e^{12}+e^{24}-e^{34}-2e^{36},\quad de^8=-e^{14}-\tfrac{5}{2}e^{34}+2e^{35}-2e^{56},\quad de^9=-e^{12}+e^{16}-e^{25}+e^{36}-e^{45}-e^{56}.\]}
Let $\ip$ be the inner product such that the basis is orthonormal, and the non-abelian complex structure $J$ defined by,
\[Je_{2i-1}=e_{2i},\quad \forall i\in\{1,\dots,5\}.\]
The Hermitian manifold $(\ngo,J,\ip)$ has the following torsion $3$-form of the Bismut connection
\[c=-e^{127}-e^{129}+e^{137}+e^{169}+e^{238}-e^{259}-e^{347}-\tfrac{5}{2}e^{348}+e^{369}+2e^{457}-e^{459}+2e^{469}-2e^{568}-e^{569},\] 
which turns out to be closed and therefore $(\ngo,J,\ip)$ is SKT.
Observe that if $\zg$ is the center of $\ngo$, then $\zg\cap[\ngo,\ngo]^\perp=\vspan\{e_{10}\}$.
\end{example}

\begin{example}\label{ej_dim12} Consider the $12$-dimensional Lie algebra $\ngo$ with basis $\{e_1,\dots,e_{12}\}$ and Lie bracket determined by
\[\begin{array}{c}de^7=-e^{12}+e^{24},\quad de^8=-e^{14}+2e^{16}-2e^{25},\quad de^9=-e^{12}-e^{34}-e^{56},\\ de^{10}=-e^{34},\quad de^{11}=-e^{12}+e^{36}-e^{45}-3e^{56}.\end{array}\]
Let $\ip$ be the inner product such that the basis is orthonormal, and the non-abelian complex structure $J$ defined by,
\[Je_{2i-1}=e_{2i},\quad \forall i\in\{1,\dots,6\}.\]
The the Bismut connection of $(\ngo,J,\ip)$ has the following torsion $3$-form  
\[c=-e^{127}-e^{129}-e^{12\,11}+e^{137}+2e^{168}+e^{238}-2e^{258}-e^{349}-e^{34\,10}+e^{36\,11}-e^{45\,11}-e^{569}-3e^{56\,11},\] 
which is closed, and so $(\ngo,J,\ip)$ is SKT.
Note that if $\zg$ is the center of $\ngo$, then $\zg\cap[\ngo,\ngo]^\perp=\vspan\{e_{12}\}$.
\end{example}

\begin{proposition}
For each natural $n\geq 3$, there exists at least one $2n$-dimensional SKT Lie algebra with non-abelian complex structure. 
\end{proposition}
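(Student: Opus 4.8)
The plan is to prove this by exhibiting, for every $n\geq 3$, an explicit irreducible SKT Lie algebra of dimension $2n$ carrying a non-abelian complex structure, using the construction of Section \ref{construction} together with the known examples listed above. The starting point is the observation that Example \ref{ej_dim8} already settles the case $n=4$, so it suffices to produce examples in all remaining dimensions $2n$ with $n\in\{3\}\cup\{5,6,7,\dots\}$. For $n=3$ we can invoke Example \ref{ej_dim6_nonabelian}, which is a $6$-dimensional SKT Lie algebra with a non-abelian complex structure. This already covers $n=3$ and $n=4$ directly.

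For the remaining values $n\geq 5$, the idea is to feed a non-abelian building block into the construction of Section \ref{construction}, pairing it repeatedly with the $4$-dimensional abelian block $(\ngo_1,J_1,\ip_1)$ of Example \ref{ej_dim4}. By Remark \ref{suma-ab}, the resulting complex structure is non-abelian as soon as one of the two factors has a non-abelian complex structure; and by Remark \ref{repeat}, the output again satisfies $\dim\zg(\ggo)>\dim[\ggo,\ggo]$, so the construction can be iterated. Concretely, starting from the $8$-dimensional non-abelian Example \ref{ej_dim8} and repeatedly applying the construction with $(\ngo_1,J_1,\ip_1)$ adds $4+2=6$ to the dimension each time, producing irreducible non-abelian SKT Lie algebras of dimensions $8,14,20,\dots$, i.e.\ $2n$ with $n\equiv 1\pmod 3$ and $n\geq 4$. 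Similarly, Example \ref{ej_dim10} (dimension $10$) iterated with $(\ngo_1,J_1,\ip_1)$ gives dimensions $10,16,22,\dots$, covering $n\equiv 2\pmod 3$ with $n\geq 5$; and Example \ref{ej_dim12} (dimension $12$) iterated with $(\ngo_1,J_1,\ip_1)$ gives dimensions $12,18,24,\dots$, covering $n\equiv 0\pmod 3$ with $n\geq 6$. Together with the base cases $n=3$ (Example \ref{ej_dim6_nonabelian}), $n=4$ (Example \ref{ej_dim8}) and $n=5$ (Example \ref{ej_dim10}), every $n\geq 3$ is obtained.

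The verification that each step is legitimate is routine given the results already established: the construction of Section \ref{construction} requires that the two input Lie algebras be irreducible $2$-step nilpotent SKT with $\dim\zg_i>\dim[\ngo_i,\ngo_i]$, and each of Examples \ref{ej_dim4}, \ref{ej_dim8}, \ref{ej_dim10}, \ref{ej_dim12} is checked (or stated) to be of this form, with the distinguished vector $\zg_i\cap[\ngo_i,\ngo_i]^\perp$ identified explicitly in each case; Remark \ref{repeat} guarantees the condition persists after one application, so the induction goes through. The only genuine point needing care is confirming that a chosen non-abelian building block really does have $\dim\zg>\dim[\ngo,\ngo]$ — this is exactly why Example \ref{ej_dim6_nonabelian}, where $\zg=[\ngo,\ngo]$, cannot be used as an input to the construction and is instead used only as the standalone $n=3$ example. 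Thus the main (and essentially only) obstacle is bookkeeping: arranging three arithmetic progressions with common difference $6$, anchored at $n=4,5,6$, so that their union together with $n=3$ is all of $\{n\in\NN : n\geq 3\}$. Once this is laid out, the proposition follows immediately.

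\begin{proof}
The case $n=3$ is given by Example \ref{ej_dim6_nonabelian}, the case $n=4$ by Example \ref{ej_dim8}, and the case $n=5$ by Example \ref{ej_dim10}. For $n\geq 6$ we argue by the construction of Section \ref{construction}. Each of the SKT Lie algebras $(\ngo_3,J_3,\ip_3)$, and those of Examples \ref{ej_dim10} and \ref{ej_dim12}, is irreducible, $2$-step nilpotent, has non-abelian complex structure, and satisfies $\dim\zg>\dim[\ngo,\ngo]$ (with the distinguished element of $\zg\cap[\ngo,\ngo]^\perp$ indicated in each case). Applying the construction of Section \ref{construction} to such a Lie algebra and to $(\ngo_1,J_1,\ip_1)$ of Example \ref{ej_dim4} produces an irreducible SKT Lie algebra of dimension increased by $6$; by Remark \ref{suma-ab} its complex structure is again non-abelian, and by Remark \ref{repeat} it again satisfies $\dim\zg(\ggo)>\dim[\ggo,\ggo]$, so the procedure may be iterated. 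Starting from dimension $8$ (Example \ref{ej_dim8}) we obtain all dimensions $2n$ with $n\equiv 1\pmod 3$, $n\geq 4$; starting from dimension $10$ (Example \ref{ej_dim10}) we obtain all dimensions $2n$ with $n\equiv 2\pmod 3$, $n\geq 5$; and starting from dimension $12$ (Example \ref{ej_dim12}) we obtain all dimensions $2n$ with $n\equiv 0\pmod 3$, $n\geq 6$. Hence every $n\geq 3$ is realized.
\end{proof}
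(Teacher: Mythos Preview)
Your proof is correct and follows essentially the same approach as the paper: base cases from Examples \ref{ej_dim6_nonabelian}, \ref{ej_dim8}, \ref{ej_dim10}, \ref{ej_dim12}, then repeated application of the construction of Section \ref{construction} paired with the $4$-dimensional block of Example \ref{ej_dim4} to produce the three arithmetic progressions $8+6m$, $10+6m$, $12+6m$, invoking Remarks \ref{repeat} and \ref{suma-ab} to iterate and to preserve non-abelianness. Your explicit remark that Example \ref{ej_dim6_nonabelian} cannot serve as an input because there $\zg=[\ngo,\ngo]$ is a nice clarification that the paper leaves implicit.
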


\begin{proof}
For $n=3,4,5,6$, see Examples \ref{ej_dim6_nonabelian}, \ref{ej_dim8}, \ref{ej_dim10} and  \ref{ej_dim12}. In order to obtain examples of higher dimensions, the construction described above can be repeatedly applied, starting with one SKT Lie algebra with $J$ non-abelian. For instance, in order to obtain an example of dimension $14$, the construction can be applied to the SKT Lie algebras  $(\ngo_1,J_1,\ip_1)$ and $(\ngo_3,J_3,\ip_3)$ from Examples \ref{ej_dim4} and \ref{ej_dim8}, respectively. Then, using the new SKT Lie algebra and Example \ref{ej_dim4}, a new SKT Lie algebra of dimension 20 is obtained, and with an inductive argument, SKT Lie algebras of dimension $8+6m,$ with $m\in\NN$ are reached. Analogously, examples of dimensions $10 + 6m$ and $12 + 6m$, with $m \in \NN$, are
obtained from  Examples \ref{ej_dim4} and \ref{ej_dim10}, and Examples \ref{ej_dim4} and \ref{ej_dim12}, respectively.
\end{proof}

\bibliography{skt}

\renewcommand{\MR}[1]{} \newcommand{\noop}[1]{} \def\cprime{$'$}
\providecommand{\bysame}{\leavevmode\hbox to3em{\hrulefill}\thinspace}
\providecommand{\MR}{\relax\ifhmode\unskip\space\fi MR }
% \MRhref is called by the amsart/book/proc definition of \MR.
\providecommand{\MRhref}[2]{%
  \href{http://www.ams.org/mathscinet-getitem?mr=#1}{#2}
}
\providecommand{\href}[2]{#2}
\begin{thebibliography}{DFFLY21}

\bibitem[AL19]{AL19}
Romina~M. Arroyo and Ramiro~A. Lafuente, \emph{The long-time behavior of the
  homogeneous pluriclosed flow}, Proceedings of the London Mathematical Society
  \textbf{119} (2019), no.~3, 266--289.

\bibitem[Bis89]{Bis89}
Jean-Michel Bismut, \emph{A local index theorem for non-{K}\"ahler manifolds},
  Math. Ann. \textbf{284} (1989), no.~4, 681--699. \MR{1006380}

\bibitem[DFFLY21]{DFFL21}
Bachir Djebbar, Ana~Cristina Ferreira, Anna Fino, and Nourhane~Zineb
  Larbi~Youcef, \emph{Locally conformal {SKT} structures}, Disponible en
  arXiv:2110.03280, (2021).

\bibitem[EFV12]{EFV12}
Nicola Enrietti, Anna Fino, and Luigi Vezzoni, \emph{Tamed symplectic forms and
  strong {K}\"ahler with torsion metrics}, J. Symplectic Geom. \textbf{10}
  (2012), no.~2, 203--223. \MR{2926995}

\bibitem[Enr13]{Enr13}
Nicola Enrietti, \emph{Static {SKT} metrics on {L}ie groups}, Manuscripta Math.
  \textbf{140} (2013), no.~3-4, 557--571. \MR{3019139}

\bibitem[FPS04]{FinoPartonSalamon}
Anna Fino, Maurizio Parton, and Simon Salamon, \emph{Families of strong {KT}
  structures in six dimensions}, Comment. Math. Helv. \textbf{79} (2004),
  no.~2, 317--340. \MR{2059435}

\bibitem[FS21]{FS20}
Marco Freibert and Andrew Swann, \emph{Two-step solvable {SKT} shears},
  Mathematische Zeitschrift \textbf{299} (2021), 1703--1739.

\bibitem[FTV21]{FTV21}
Anna Fino, Nicoletta Tardini, and Luigi Vezzoni, \emph{Pluriclosed and
  {S}trominger {K}\"ahler-like metrics compatible with abelian complex
  structures}, arXiv:2102.01920, (2021).

\bibitem[FV16]{FV16}
Anna Fino and Luigi Vezzoni, \emph{On the existence of balanced and {SKT}
  metrics on nilmanifolds}, Proc. Amer. Math. Soc. \textbf{144} (2016), no.~6,
  2455--2459.

\bibitem[FV19]{FV2019}
\bysame, \emph{A correction to "{T}amed symplectic forms and strong {K}\"ahler
  with torsion metrics"}, Journal of Symplectic Geometry \textbf{17} (2019),
  no.~4, 1079--1081.

\bibitem[Gau97]{Gau97}
Paul Gauduchon, \emph{Hermitian connections and {D}irac operators}, Boll. Un.
  Mat. Ital. B (7) \textbf{11} (1997), no.~2, suppl., 257--288. \MR{1456265}

\bibitem[MS11]{MadsenSwann}
Thomas~Bruun Madsen and Andrew Swann, \emph{Invariant strong {KT} geometry on
  four-dimensional solvable {L}ie groups}, J. Lie Theory \textbf{21} (2011),
  no.~1, 55--70. \MR{2797819}

\bibitem[Sal01]{Salamon01}
Simon Salamon, \emph{Complex structures on nilpotent lie algebras}, Journal of
  Pure and Applied Algebra \textbf{157} (2001), 311--333.

\bibitem[Str86]{S86}
Andrew Strominger, \emph{Superstrings with torsion}, Nuclear Phys. B
  \textbf{274} (1986), 253--284.

\bibitem[Uga07]{Ugarte07}
Luis Ugarte, \emph{Hermitian structures on six-dimensional nilmanifolds},
  Transform. Groups \textbf{12} (2007), no.~1, 175--202. \MR{2308035}

\bibitem[ZZ19]{ZZ19}
Quanting Zhao and Fangyang Zheng, \emph{Complex nilmanifolds and
  {K}\"ahler-like connections}, J. Geom. Phys. \textbf{146} (2019), 103512.

\end{thebibliography}
\bibliographystyle{amsalpha}
\end{document}